\documentclass[12pt,letterpaper]{article}
\usepackage{amssymb,amsmath,amsthm}
\usepackage{subcaption}
\usepackage{graphicx}
\usepackage[numbers,square]{natbib}
\setlength{\bibsep}{0.0pt}
\usepackage{sectsty}
\sectionfont{\normalsize}
\subsectionfont{\normalsize}
\subsubsectionfont{\normalsize}
\paragraphfont{\normalsize}
\newtheorem{thm}{Theorem}
\newtheorem*{thm*}{Theorem}
\newtheorem{lem}[thm]{Lemma}

\newtheorem{cor}[thm]{Corollary}
\newtheorem{clm}[thm]{Claim}
\newcommand{\llll}[1] {\left #1}
\newcommand{\rrrr}[1] {\right #1}

\newcommand{\dddd}[2]{\dfrac{#1}{#2}}

\newcommand{\aaaa}{\alpha}
\newcommand{\tttt}{\tau}
\newcommand{\ssss}{\sigma}

\newcommand{\dddddd}{\delta}
\newcommand{\bbbb}{\beta}
\newcommand{\GGGG}{\Gamma}
\newcommand{\zzzz}{\zeta}
\newcommand{\eeee}{\epsilon}
\newcommand{\rrrrrrrr}{\rightarrow}
\begin{document}
\nocite{*}
\title{{\bf \normalsize A SECOND ORDER APPROXIMATION FOR THE CAPUTO FRACTIONAL DERIVATIVE }}
\author{Yuri Dimitrov\\
Department of Applied Mathematics and Statistics \\
University of Rousse, Rousse  7017, Bulgaria\\
\texttt{ymdimitrov@uni-ruse.bg}}
\maketitle
\begin{abstract} 
When  $0<\aaaa<1$, the approximation for the Caputo derivative  
$$y^{(\aaaa)}(x)=\dddd{1}{\GGGG(2-\aaaa)h^\aaaa}\sum_{k=0}^n \ssss_k^{(\aaaa)} y(x-kh)+O\llll(h^{2-\aaaa}\rrrr),
$$
where $\sigma_0^{(\aaaa)}=1, \ssss_n^{(\aaaa)}=(n-1)^{1-a}-n^{1-a}$ and 
$$\sigma_k^{(\aaaa)}=(k-1)^{1-\aaaa}-2k^{1-a}+(k+1)^{1-\aaaa},\quad (k=1\cdots,n-1),$$
has accuracy $O\llll(h^{2-\aaaa}\rrrr)$.
We use the  expansion of  $\sum_{k=0}^n k^\aaaa$ to determine an approximation for the fractional integral of order $2-\aaaa$ and the second order approximation for the Caputo derivative 
$$y^{(\aaaa)}(x)=\dddd{1}{\GGGG(2-\aaaa)h^\aaaa}\sum_{k=0}^n \delta_k^{(\aaaa)} y(x-kh)+O\llll(h^{2}\rrrr),$$
where $\dddddd_k^{(\aaaa)}=\ssss_k^{(\aaaa)}$ for $2\leq k\leq n$,
$$\dddddd_0^{(\aaaa)}=\ssss_0^{(\aaaa)}-\zzzz(\aaaa-1), \dddddd_1^{(\aaaa)}=\ssss_1^{(\aaaa)}+2\zzzz(\aaaa-1),\dddddd_2^{(\aaaa)}=\ssss_2^{(\aaaa)}-\zzzz(\aaaa-1),$$
and $\zzzz(s)$ is the Riemann zeta function. The numerical solutions of the fractional  relaxation and  subdiffusion equations are computed.\\
{\bf 2010 Math Subject Classification:} 26A33, 34E05, 33F05, 26A33\\
{\bf Key Words and Phrases:} fractional derivative, fractional integral, approximation, numerical solution, fractional differential equation. 
\end{abstract}
\section{Introduction}\label{Intro} 
Fractional differential equations are used for modeling complex   diffusion processes in science and engineering [1--5]. 
The Caputo fractional derivatives are important as a tool for describing nature as well as for their relation to integer order derivatives and special functions. The Caputo derivative  of order $\aaaa$, when $0<\aaaa<1$, is defined as the convolution of the power function $x^{-a}$ and the first derivative of the function on the interval $[0,x]$
$$y^{(\aaaa)}(x)=D^\aaaa y(x)=\dddd{d^{\aaaa}y(x)}{d x^\aaaa}=\dddd{1}{\Gamma (1-\aaaa)}\int_0^x \dfrac{y'(\xi)}{(x-\xi)^{\aaaa}}d\xi.$$
When the function $y(x)$ is defined on the interval $(-\infty,x]$, the lower limit of the integral in the definition of Caputo derivative is $-\infty$. The Caputo derivative of the constant function $1$ is zero, and
$$D^\aaaa D^{1-\aaaa}y(x)=y^\prime(x).$$
 While the integer order derivatives describe the local behavior of a function, the fractional derivative $y^{(\aaaa)}(x)$ depends on the values of the function on the interval $[0,x]$. One approach for discretizing the Caputo derivative is to divide the interval  to subintervals of small length and approximate the values of the function on each subinterval with a Lagrange polynomial.  Let   $x_n=n h$  and $y_n=y(x_n)=y(n h)$, where $h>0$ is a small number. The Lagrange polynomial  for the function $y'(x)$  at the midpoint $x_{k-0.5}$ of the interval $[x_{k-1},x_k]$  is the value of $y^\prime(x_{k-0.5})$.

 Approximation \eqref{A1} for the Caputo fractional derivative is a commonly used approximation for numerical solutions of ordinary and partial fractional differential equations [6-8]. 
\begin{equation*}
\begin{aligned}
 \Gamma (1-\alpha) y^{(\alpha)}(x_n)&=\int_0^{x_n} \dfrac{y'(\xi)}{(x_n-\xi)^\alpha}d\xi\approx\sum_{k=1}^n\int_{x_{k-1}}^{x_k} \dfrac{y'(x_{k-0.5})}{(x_n-\xi)^\alpha}d\xi \\
&  \approx \sum_{k=1}^n \dfrac{y(x_k)-y(x_{k-1})}{h}\int_{(k-1)h}^{kh} \dfrac{1}{(nh-\xi)^\alpha}d\xi\\
&  = \sum_{k=1}^n \dfrac{y_k-y_{k-1}}{h}  \dfrac{((n-k+1)h)^{1-\alpha}-((n-k)h)^{1-\alpha}}{1-\alpha}.\\
\end{aligned}
\end{equation*}
Let $\rho_k^{(\alpha)}=(n-k+1)^{1-\alpha}-(n-k)^{1-\alpha}$.
\begin{align*}
\Gamma (2-\alpha)h^\alpha y^{(\alpha)}_n   &\approx \sum_{k=1}^n \rho_k^{(\alpha)}(y_k-y_{k-1})=\sum_{k=1}^n y_k\rho_k^{(\alpha)}-\sum_{k=1}^n y_{k-1}\rho_{k}^{(\alpha)}\\
&   =\rho_n^{(\alpha)} y_n+\sum_{k=1}^{n-1} y_k\left(\rho_k^{(\alpha)}-\rho_{k-1}^{(\alpha)}\right)-\rho_1^{(\alpha)}y_0.
\end{align*}
Then
$$y^{(\alpha)}_n\approx \dddd{1}{\Gamma (2-\alpha)h^\alpha}\left(\rho_n^{(\alpha)} y_n+\sum_{k=1}^{n-1} y_{n-k}\left(\rho_{n-k}^{(\alpha)}-\rho_{n-k+1}^{(\alpha)} \right)-\rho_1^{(\alpha)}y_0\right).$$
Let $\ssss_0^{(\alpha)}=\rho_n^{(\alpha)}=1$,  $\ssss_n^{(\alpha)}=-\rho_1^{(\alpha)}=(n-1)^{1-a}-n^{1-a}$ and
$$\ssss_k^{(\alpha)}=\rho_{n-k}^{(\alpha)}-\rho_{n-k+1}^{(\alpha)}=(k+1)^{1-\alpha}-2k^{1-\alpha}+(k-1)^{1-\alpha},$$
for $k=1,2,\cdots,n-1$. Denote
$$\mathcal{A}_{h}y_n= \sum_{k=0}^{n} \ssss_k^{(\alpha)} y_{n-k}. $$
 We obtain the approximation for the Caputo derivative
\begin{equation}
y^{(\alpha)}_n  \approx\dfrac{1}{\GGGG(2-\aaaa)h^\alpha}\mathcal{A}_{h}y_n.
\label{A1}
\end{equation}
Approximation \eqref{A1} has accuracy $O(h^{2-\alpha})$ when $y\in C^2[0,x_n]$ (\cite{LinXu2007}). 
\begin{table}[ht]
	\caption{Error and order of approximation \eqref{A1} for     $y(x)=\cos x$ on the interval $[0,1]$, when $\aaaa=0.6$.}
	\centering
  \begin{tabular}{ l  c c c }
    \hline \hline
    $h$  & $Error$                 & $Ratio$ & $Order$  
		\\ \hline \hline
$0.05$     & $0.0023484$          & $2.69618$   & $1.43092$   \\ 
$0.025$    & $0.000878437$        & $2.67338$   & $1.41867$   \\ 
$0.0125$   & $0.000330265$        & $2.65979 $  & $1.41131$   \\ 
$0.00625$  & $0.000124548$        & $2.65171$   & $1.40692$   \\ 
$0.003125$ & $0.0000470549$       & $2.64687$   & $ 1.40429$   \\ \hline
  \end{tabular}
	\end{table}
	
 The numbers $\ssss_k^{(\alpha)}$ have the following  properties:
\begin{equation*}
\ssss_0^{(\alpha)}>0,\quad \ssss_1^{(\alpha)}<\ssss_2^{(\alpha)}<\cdots<\ssss_k^{(\alpha)}<\cdots<0,\quad \sum_{k= 0}^\infty \ssss_k^{(\alpha)} = 0.
\end{equation*}

Approximation \eqref{A1} and its modifications have been successfully used for numerical solutions of fractional differential equations, as well as in proofs of the convergence of numerical methods. One disadvantage of \eqref{A1} is that when the order of the Caputo fractional derivative $\aaaa \approx 1$, its accuracy decreases to $ O(h)$. The numerical solutions of multidimensional partial fractional differential equations require a large number of computations,  when the approximation has accuracy $O(h)$. 

In section 4, we determine the second order approximation \eqref{A2} for the Caputo derivative by modifying the  first three coefficients of \eqref{A1} with values of the Riemann zeta function. Approximation \eqref{A2} has accuracy $O\llll(h^2\rrrr)$ for all values of $\aaaa$ between $0$ and $1$. 

The ordinary fractional differential equation
\begin{equation}\label{freqn}
y^{(\aaaa)}+By=F(t),
\end{equation}
is called relaxation equation when $0<\aaaa<1$, and oscillation equation when $1<\aaaa<2$. 
 In section 5 we compare the numerical solutions for the relaxation and the time-fractional subdiffusion equations for discretizations \eqref{A1} and \eqref{A2}.  We observe a noticeable improvement of the accuracy of the numerical solutions using approximation \eqref{A2} for  Caputo derivative,  especially when $\aaaa \approx 1$.

When $y(x)$ is a sufficiently differentiable function, the integral in the definition of the Caputo derivative has a singularity at the point $x$. Sidi \cite{Sidi2004} discusses approximations for integrals with singularities. 

The sum of the powers of the first $n-1$ integers has  expansion \cite{AbramowitzStegun1964}
\begin{equation}\label{Expansion1}
\sum_{k=1}^{n-1} k^\aaaa = \zzzz(-\aaaa)+\dddd{n^{\aaaa+1}}{\aaaa+1}\sum_{m=0}^\infty \binom{\aaaa+1}{m}\dddd{B_{m}}{n^m},
\end{equation}
where $\aaaa \neq -1$ and $B_m$ are the Bernoulli numbers. In section 3, we use expansion \eqref{Expansion1} to determine a second order approximation \eqref{A3} for the left Riemann sums and the fractional integral of order $2-\aaaa$.
In section 4 we determine the second order approximation  for the Caputo derivative  \eqref{A2} from  \eqref{A3},  using discrete integration by parts and second order backward difference approximation for the second derivative. 
\section{Preliminaries}
In this section we introduce the basic definitions and facts used in the paper. The fractional integral of order $\aaaa$ is defined  as the convolution of the function $y(x)$ and the power function $x^{\aaaa-1}$ on the interval $[0,x]$
$$J^\aaaa y(x)=\dddd{1}{\Gamma (\aaaa)}\int_0^x \dfrac{y(\xi)}{(x-\xi)^{1-\aaaa}}d\xi,$$
 where $\aaaa>0$. The fractional integral of order $\aaaa$ is often denoted as $y^{(-\aaaa)}(x)$. The value of the fractional integral of order $\aaaa$ of the constant function $1$ is $x^\aaaa/\GGGG(\aaaa+1)$. The Caputo derivative is defined as the composition of $y^\prime(x)$ with a fractional integral of
of order $1-a$. In Claim 1, we represent the Caputo derivative with the composition of the second derivative $y^{\prime \prime}(x)$ and a fractional integral of order $2-\aaaa$
$$y^{(\aaaa)}(x)=J^{1-\aaaa}y^\prime (x)=J^{2-\aaaa}y^{\prime \prime} (x)+\dddd{y^{\prime}(0)x^{1-\aaaa}}{\GGGG(2-\aaaa)}.$$
The composition of fractional integrals satisfies
$$J^\aaaa J^\bbbb y(x)=J^\bbbb J^\aaaa y(x)=J^{\aaaa+\bbbb}y(x).$$
The composition of the Caputo derivative and the fractional integral of order $\aaaa$, when $0<\aaaa<1$, has properties
$$D^{\aaaa}J^{\aaaa}y(x)=y(x),\quad J^{\aaaa}D^{\aaaa}y(x)=y(x)-y(0).$$
In Theorem 3 we use the expansion of the  sum of the powers of the first $n-1$ integers \eqref{Expansion1}, to determine the  second order approximation for the fractional integral of order $2-\aaaa$
$$\dddd{h^{2-\aaaa}}{\GGGG(2-\aaaa)}\sum_{k=1}^n k^{1-\aaaa}y(x-k h) \approx J^{2-\aaaa}y(x)+\dddd{y(0)}{2\GGGG(2-\aaaa)}x^{1-\aaaa}h+
\dddd{\zzzz(\aaaa-1)}{\GGGG(2-\aaaa)}y(x)h^{2-\aaaa},$$
where $h=x/n$, and $\zzzz(s)$ is the Riemann zeta function, defined as the analytic continuation of the function
$$\zzzz(s)=1+\dddd{1}{2^s}+\dddd{1}{3^s}+\cdots+\dddd{1}{n^s}+\cdots=\sum_{n=1}^\infty {n^{-s}}  \quad ( Re(s)>1).$$
In the special case of \eqref{Expansion1}, when $\aaaa=-1$, the  sums of the harmonic series have expansion \cite{AbramowitzStegun1964} 
$$\sum_{k=1}^{n-1} \dddd{1}{k} \approx \ln n+\gamma-\dddd{1}{2n}-\sum_{s=1}^\infty \dddd{B_{2m}}{2m}\dddd{1}{n^{2m}},$$
where $\gamma\approx 0.5772$ is the Euler-Mascheroni constant and $B_{2m}$ are the Bernoulli numbers.

In section 4  we use the approximation for the fractional integral \eqref{A3}, to determine the second order approximations for the Caputo fractional derivative
\begin{equation*}
y^{(\aaaa)}(x)=\dddd{1}{\GGGG(2-\aaaa)h^\aaaa}\sum_{k=0}^n \ssss_k^{(\aaaa)} y(x-kh)-\dddd{\zzzz(\aaaa-1)}{\GGGG(2-\aaaa)}y^{\prime\prime}(x)h^{2-\aaaa}+O\llll(h^{2}\rrrr),
\end{equation*} 
\begin{equation}\label{A2}
y^{(\aaaa)}(x)=\dddd{1}{\GGGG(2-\aaaa)h^\aaaa}\sum_{k=0}^n \delta_k^{(\aaaa)} y(x-kh)+O\llll(h^{2}\rrrr).
\end{equation}
The numbers $\dddddd_k^{(\aaaa)}$ are computed from the coefficients $\ssss_k^{(\aaaa)}$ of  \eqref{A1} by
$$\dddddd_0^{(\aaaa)}=\ssss_0^{(\aaaa)}-\zzzz(\aaaa-1),\; \dddddd_1^{(\aaaa)}=\ssss_1^{(\aaaa)}+2\zzzz(\aaaa-1),\; \dddddd_2^{(\aaaa)}=\ssss_2^{(\aaaa)}-\zzzz(\aaaa-1),$$
$$\dddddd_k^{(\aaaa)}=\ssss_k^{(\aaaa)}\quad ( k=2,3,\cdots, n).$$
The values of the Riemann zeta function  satisfy  \cite{Havil2003}
$$\zzzz(s)=\dddd{1}{1-2^{1-s}}\sum_{n=0}^\infty \dddd{1}{2^{n+1}}\sum_{k=0}^n (-1)^k\binom{n}{k}(k+1)^{-s},
$$
for all $s\in \mathbb{C}$, and  the functional equation
$$\zzzz(s)=2^s\pi^{s-1}\sin \llll(\dddd{\pi s}{2} \rrrr)\GGGG (1-s)\zzzz (1-s).$$
From the functional equation for the Riemann zeta function we obtain a representation of  $\zzzz(\aaaa-1)/\GGGG(2-\aaaa)$ 
 $$\dddd{\zzzz(\aaaa-1)}{\GGGG(2-\aaaa)}=-2^{\aaaa-1}\pi^{\aaaa-2}
\cos \llll(\dddd{\pi \aaaa}{2} \rrrr)\zzzz (2-\aaaa).
$$
\section{Approximation for the Fractional Integral of Order $2-\aaaa$}
In this section we determine a second order approximation \eqref{A3} for the fractional integral of order $2-\aaaa$, when $0<\aaaa<1$
$$J^{2-\aaaa} y(x)=\dddd{1}{\Gamma (2-\aaaa)}\int_0^x (x-\xi)^{1-\aaaa} y(\xi)d\xi.$$
Approximation \eqref{A3} uses the left Riemann sums of a uniform partition of the interval $[0,x]$, and the values of $y(0)$ and $y(x)$. The Caputo derivative $y^{(\aaaa)}(x)=J^{1-\aaaa}y^\prime(x)$ is defined as the composition of the fractional integral of order $1-\aaaa$ and the first derivative $y^\prime(x)$. In Claim 1 we use integration by parts to express the Caputo derivative as a composition of the fractional integral of order $2-\aaaa$ and the second derivative $y^{\prime\prime}(x)$. 
\begin{clm} Let $y\in C^2[0,x]$, and  $0<\aaaa<1$. 
$$\GGGG(2-\aaaa)y^{(\aaaa)}(x)=\GGGG(2-\aaaa)J^{2-\aaaa}y^{\prime\prime}(x)+y^\prime (0)x^{1-\aaaa}.$$
\end{clm}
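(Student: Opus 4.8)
The plan is to obtain the identity directly from the definition $y^{(\aaaa)}(x)=J^{1-\aaaa}y'(x)$ by a single integration by parts that transfers one derivative off of $y'$ and onto the singular kernel $(x-\xi)^{-\aaaa}$. First I would write
$$\GGGG(1-\aaaa)y^{(\aaaa)}(x)=\int_0^x (x-\xi)^{-\aaaa}y'(\xi)\,d\xi,$$
and treat the right-hand side as an improper integral by integrating first over $[0,x-\eeee]$ for small $\eeee>0$ and then letting $\eeee\to 0^+$; this is justified because $\aaaa<1$ makes $(x-\xi)^{-\aaaa}$ integrable near $\xi=x$ while $y'\in C[0,x]$ is bounded. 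On $[0,x-\eeee]$, integration by parts with $u=y'(\xi)$ and $dv=(x-\xi)^{-\aaaa}\,d\xi$, so that $v=-(x-\xi)^{1-\aaaa}/(1-\aaaa)$, produces a boundary contribution together with $\dddd{1}{1-\aaaa}\int_0^{x-\eeee}(x-\xi)^{1-\aaaa}y''(\xi)\,d\xi$.

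Next I would evaluate the two boundary terms. At the endpoint $\xi=x-\eeee$ the factor $(x-\xi)^{1-\aaaa}=\eeee^{1-\aaaa}\to 0$ since $1-\aaaa>0$ and $y'$ is bounded, so this contribution disappears in the limit. At $\xi=0$ the boundary term equals $y'(0)x^{1-\aaaa}/(1-\aaaa)$. Letting $\eeee\to 0^+$ and recognizing the surviving integral as $\GGGG(2-\aaaa)J^{2-\aaaa}y''(x)$, I arrive at
$$\GGGG(1-\aaaa)y^{(\aaaa)}(x)=\dddd{y'(0)x^{1-\aaaa}}{1-\aaaa}+\dddd{\GGGG(2-\aaaa)}{1-\aaaa}J^{2-\aaaa}y''(x).$$

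Finally, multiplying through by $1-\aaaa$ and using the recurrence $\GGGG(2-\aaaa)=(1-\aaaa)\GGGG(1-\aaaa)$ yields the claimed formula. The only point that needs care is the treatment of the endpoint $\xi=x$, where the kernel of the original integral is singular; the truncate-and-pass-to-the-limit argument handles it, and the hypothesis $y\in C^2[0,x]$ (hence $y''$ bounded on $[0,x]$) guarantees that $J^{2-\aaaa}y''(x)$ is a genuinely convergent integral. The remaining manipulations are routine.
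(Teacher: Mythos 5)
Your argument is correct, but it takes a different route from the paper's. The paper proves the claim purely algebraically from the composition law for fractional integrals: it writes $J^{2-\aaaa}y''=J^{1-\aaaa}J^{1}y''=J^{1-\aaaa}\llll(y'(x)-y'(0)\rrrr)$, so that $y^{(\aaaa)}(x)=J^{1-\aaaa}y'(x)=J^{2-\aaaa}y''(x)+J^{1-\aaaa}y'(0)$, and then evaluates $J^{1-\aaaa}$ of the constant $y'(0)$ as $y'(0)x^{1-\aaaa}/\GGGG(2-\aaaa)$. You instead integrate by parts directly in the defining integral, truncating at $\xi=x-\eeee$ to tame the singular kernel, checking that the boundary term at $x-\eeee$ vanishes like $\eeee^{1-\aaaa}$, and recovering the $y'(0)x^{1-\aaaa}/(1-\aaaa)$ term from the endpoint $\xi=0$ before using $\GGGG(2-\aaaa)=(1-\aaaa)\GGGG(1-\aaaa)$. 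Your version is more self-contained and elementary --- it does not presuppose the semigroup property $J^{\aaaa}J^{\bbbb}=J^{\aaaa+\bbbb}$, and it makes explicit exactly where the hypotheses $0<\aaaa<1$ and $y\in C^{2}[0,x]$ enter (integrability of the kernel and vanishing of the boundary term). The paper's version is shorter because it delegates the analytic work to the standard composition and fundamental-theorem identities stated in its preliminaries. Both are complete proofs of the claim.
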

\begin{proof} From the properties of the composition of fractional integrals and Caputo derivatives
$$J^{2-\aaaa}y''(x)=J^{1-\aaaa}J^1y''(x)=J^{1-\aaaa}(y'(x)-y'(0)).$$
Then
$$y^{(\aaaa)}(x)=J^{1-\aaaa}y'(x)=J^{2-\aaaa}y''(x)+J^{1-\aaaa}y'(0),$$
$$y^{(\aaaa)}(x)=J^{2-\aaaa}y''(x)+\dddd{y'(0)x^{1-\aaaa}}{\GGGG(2-\aaaa)}.$$
\end{proof}
Let $x=nh$, where $n$ is a positive integer. Consider the partition $\mathcal{P}_h$ of the interval $[0,x]$ to $n$ subintervals of length $h$.  Denote by $\mathcal{L}^{(\aaaa)}_{y,h}$ and $\mathcal{T}^{(\aaaa)}_{y,h}$ the left Riemann sum and the Trapezoidal sum of the function $(x-\xi)^{1-\aaaa}y(\xi)$ for  partition  $\mathcal{P}_h$
$$\mathcal{L}^{(\aaaa)}_{y,h}=h\sum_{m=0}^{n} (x-m h)^{1-\aaaa}y(m h)=h\sum_{m=0}^{n-1} (n h-m h)^{1-\aaaa}y(m h),$$
$$\mathcal{T}^{(\aaaa)}_{y,h}=\dddd{h}{2}\llll((n h)^{1-\aaaa}f(0)+2\sum_{m=1}^{n-1} (n h-m h)^{1-\aaaa}y(m h)\rrrr).$$
Substitute $k=n-m$
$$\mathcal{L}^{(\aaaa)}_{y,h}=h^{2-\aaaa}\sum_{k=1}^{n} k^{1-\aaaa}y(x-kh),$$
$$\mathcal{T}^{(\aaaa)}_{y,h}=\dddd{y(0)}{2}x^{1-\aaaa}h+h^{2-\aaaa}\sum_{k=1}^{n-1} k^{1-\aaaa}y(x-kh).$$
The numbers $\mathcal{L}^{(\aaaa)}_{y,h}$ and $\mathcal{T}^{(\aaaa)}_{y,h}$ are approximations for $\GGGG(2-\aaaa)J^{(2-\aaaa)}y(x)$ and 
\begin{equation}\label{L1}
\mathcal{L}^{(\aaaa)}_{y,h}-\mathcal{T}^{(\aaaa)}_{y,h}=\dddd{y(0)}{2}x^{1-\aaaa}h.
\end{equation}
Now we use  \eqref{Expansion1}, to determine a second order approximation for the left Riemann sums of the constant function $y(x)=1$.
\begin{lem} Let $x=n h$, where $n$ is a positive integer.
$$\mathcal{L}^{(\aaaa)}_{1,h}=\dddd{x^{2-\aaaa}}{2-\aaaa}+\dddd{1}{2}x^{1-\aaaa}h+\zzzz(\aaaa-1)h^{2-\aaaa}+O\llll(h^2 \rrrr).$$
\end{lem}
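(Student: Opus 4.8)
The plan is to reduce the claim to the known expansion \eqref{Expansion1}. From the displays preceding the lemma, $\mathcal{L}^{(\aaaa)}_{1,h}=h^{2-\aaaa}\sum_{k=1}^{n}k^{1-\aaaa}$, so everything comes down to an asymptotic formula for the power sum $\sum_{k=1}^{n}k^{1-\aaaa}$ as $n\to\infty$ with $nh=x$ fixed. First I would split off the last term, writing $\sum_{k=1}^{n}k^{1-\aaaa}=\sum_{k=1}^{n-1}k^{1-\aaaa}+n^{1-\aaaa}$, and apply \eqref{Expansion1} with the exponent $\aaaa$ replaced by $1-\aaaa$, which is legitimate since $0<\aaaa<1$ forces $1-\aaaa\neq-1$; this gives
\[
\sum_{k=1}^{n-1}k^{1-\aaaa}=\zzzz(\aaaa-1)+\dddd{n^{2-\aaaa}}{2-\aaaa}\sum_{m=0}^{\infty}\binom{2-\aaaa}{m}\dddd{B_{m}}{n^{m}}.
\]

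Next I would expand the first two terms of the inner sum explicitly, using $\binom{2-\aaaa}{0}=1$, $\binom{2-\aaaa}{1}=2-\aaaa$, $B_{0}=1$ and $B_{1}=-\dddd{1}{2}$, which yields
\[
\dddd{n^{2-\aaaa}}{2-\aaaa}\llll(1-\dddd{2-\aaaa}{2n}\rrrr)=\dddd{n^{2-\aaaa}}{2-\aaaa}-\dddd{1}{2}\,n^{1-\aaaa},
\]
while the terms with $m\ge2$ contribute $\dddd{n^{2-\aaaa}}{2-\aaaa}\binom{2-\aaaa}{2}\dddd{B_{2}}{n^{2}}+\cdots=O\llll(n^{-\aaaa}\rrrr)$. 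Adding back $n^{1-\aaaa}$ turns $-\dddd{1}{2}n^{1-\aaaa}$ into $+\dddd{1}{2}n^{1-\aaaa}$, so
\[
\sum_{k=1}^{n}k^{1-\aaaa}=\dddd{n^{2-\aaaa}}{2-\aaaa}+\dddd{1}{2}\,n^{1-\aaaa}+\zzzz(\aaaa-1)+O\llll(n^{-\aaaa}\rrrr).
\]
Multiplying by $h^{2-\aaaa}$ and using $nh=x$, so that $h^{2-\aaaa}n^{2-\aaaa}=x^{2-\aaaa}$, $h^{2-\aaaa}n^{1-\aaaa}=x^{1-\aaaa}h$ and $h^{2-\aaaa}n^{-\aaaa}=x^{-\aaaa}h^{2}$, gives $\mathcal{L}^{(\aaaa)}_{1,h}=\dddd{x^{2-\aaaa}}{2-\aaaa}+\dddd{1}{2}x^{1-\aaaa}h+\zzzz(\aaaa-1)h^{2-\aaaa}+O\llll(h^{2}\rrrr)$, since $x$ is held fixed. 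This is precisely the stated identity.

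The single point requiring care — and the only genuine obstacle — is that \eqref{Expansion1} is an asymptotic, not a convergent, expansion, so the step $\sum_{m\ge2}\binom{2-\aaaa}{m}B_m n^{-m}=O(n^{-\aaaa-2})$ (hence the tail contributes $O(n^{-\aaaa})$ after the prefactor) must be justified through a truncated version of the formula carrying an explicit remainder. I would do this by invoking the Euler–Maclaurin summation formula for $\sum_{k=1}^{n-1}k^{1-\aaaa}$ with the remainder taken after the $B_{2}$ term (equivalently, the finite form of \eqref{Expansion1}), which bounds that remainder by a constant times $n^{-\aaaa}$; after multiplication by $h^{2-\aaaa}$ this becomes $O(h^{2})$ because $x=nh$ is fixed. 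Apart from this estimate, the proof is just bookkeeping with the first two binomial coefficients and Bernoulli numbers.
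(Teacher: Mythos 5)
Your proposal is correct and follows essentially the same route as the paper: truncate the expansion \eqref{Expansion1} (with exponent $1-\aaaa$) after the $B_1$ term to get $\sum_{k=1}^{n}k^{1-\aaaa}=\dddd{n^{2-\aaaa}}{2-\aaaa}+\dddd{1}{2}n^{1-\aaaa}+\zzzz(\aaaa-1)+O\llll(n^{-\aaaa}\rrrr)$, then multiply by $h^{2-\aaaa}$ and use $h^{2-\aaaa}/n^{\aaaa}=h^{2}/x^{\aaaa}$. Your added remark that the $O(n^{-\aaaa})$ tail bound should really be justified via a truncated Euler--Maclaurin formula with explicit remainder is a fair point of rigor that the paper glosses over, but it does not change the argument.
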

\begin{proof} Consider the first terms of \eqref{Expansion1}
$$\sum_{k=1}^{n-1} k^{1-\aaaa}= \dddd{n^{2-\aaaa}}{2-\aaaa}-\dddd{n^{1-\aaaa}}{2}+\zzzz(\aaaa-1)+O\llll(\dddd{1}{n^a} \rrrr),$$
$$\sum_{k=1}^{n} k^{1-\aaaa}= \dddd{n^{2-\aaaa}}{2-\aaaa}+\dddd{n^{1-\aaaa}}{2}+\zzzz(\aaaa-1)+O\llll(\dddd{1}{n^a} \rrrr).$$
Multiply by $h^{2-\aaaa}$ 
$$h^{2-\aaaa}\sum_{k=1}^{n} k^{1-\aaaa} = \dddd{x^{2-\aaaa}}{2-\aaaa}+
\dddd{1}{2} x^{1-\aaaa} h+\zzzz(\aaaa-1)h^{2-\aaaa}+O\llll(\dddd{h^{2-\aaaa}}{n^\aaaa} \rrrr).$$
We have that $h^{2-\aaaa}/n^\aaaa=h^2/x^\aaaa$. Hence
$$\mathcal{L}^{(\aaaa)}_{1,h}=\dddd{x^{2-\aaaa}}{2-\aaaa}+\dddd{1}{2}x^{1-\aaaa}h+\zzzz(\aaaa-1)h^{2-\aaaa}+O\llll(h^2 \rrrr).$$
\end{proof}
In the next theorem we determine a second order approximation  for the left Riemann sums of the fractional integral $J^{2-\aaaa}y(x)$ when the function  $y(x)$ is a polynomial.
\begin{thm} Let $x=n h$ and  $y(x)$ be a polynomial.
\begin{equation}\label{A3}
\mathcal{L}_{y,h}^{(\aaaa)}=\GGGG(2-\aaaa)J^{2-\aaaa}y(x)+\dddd{y(0)}{2}x^{1-\aaaa}h+\zzzz(\aaaa-1)y(x)h^{2-\aaaa}+O\llll(h^2\rrrr).
\end{equation}
\end{thm}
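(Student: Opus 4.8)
The plan is to reduce to monomials and apply the power-sum expansion \eqref{Expansion1} termwise. Since both sides of \eqref{A3} are linear in $y$, it suffices to prove the identity for $y(t)=t^p$ with $p\ge 0$ a fixed integer; the case $p=0$ is exactly Lemma~2, so assume $p\ge 1$. Expanding $y(x-kh)=(x-kh)^p=\sum_{j=0}^{p}\binom{p}{j}x^{p-j}(-h)^jk^j$ and recalling that $\mathcal{L}^{(\aaaa)}_{y,h}=h^{2-\aaaa}\sum_{k=1}^{n} k^{1-\aaaa}y(x-kh)$, we obtain
$$\mathcal{L}^{(\aaaa)}_{x^p,h}=\sum_{j=0}^{p}\binom{p}{j}x^{p-j}(-h)^j\,h^{2-\aaaa}\sum_{k=1}^{n} k^{\,1-\aaaa+j},$$
so everything is controlled by the asymptotics of the power sums $\sum_{k=1}^{n} k^{\beta_j}$ with $\beta_j=1-\aaaa+j>0$.

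Next I would apply \eqref{Expansion1} to $\sum_{k=1}^{n}k^{\beta_j}=\sum_{k=1}^{n-1}k^{\beta_j}+n^{\beta_j}$ and keep its first three terms, exactly as in the proof of Lemma~2 (which is the case $j=0$):
$$\sum_{k=1}^{n} k^{\beta_j}=\dddd{n^{\beta_j+1}}{\beta_j+1}+\dddd{n^{\beta_j}}{2}+\zzzz(-\beta_j)+O\llll(n^{\beta_j-1}\rrrr).$$
Substituting $n=x/h$ and then multiplying through by the remaining factor $\binom{p}{j}x^{p-j}(-h)^j$, the four pieces of the $j$-th summand become, respectively, $(-1)^j\binom{p}{j}\dddd{x^{\,p+2-\aaaa}}{2-\aaaa+j}$ (order $1$), $\dddd{(-1)^j}{2}\binom{p}{j}x^{\,p+1-\aaaa}h$ (order $h$), $(-1)^j\binom{p}{j}x^{\,p-j}\zzzz(\aaaa-1-j)\,h^{\,2-\aaaa+j}$, and a term that is $O(h^2)$ because $x$ is fixed. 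Here I would emphasize the bookkeeping: the order-$1$ and order-$h$ pieces survive for \emph{every} $j$ — in the order-$h$ piece the factor $h^{1-j}$ produced by the rescaling cancels the $(-h)^j$ — whereas $2-\aaaa+j\ge 3-\aaaa>2$ for $j\ge 1$ since $0<\aaaa<1$, so among the $\zzzz$-pieces only the $j=0$ one fails to be $O(h^2)$ already.

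Finally I would sum over $j$. The order-$1$ contribution is $x^{\,p+2-\aaaa}\sum_{j=0}^{p}\dddd{(-1)^j\binom{p}{j}}{2-\aaaa+j}$, and the elementary identity $\sum_{j=0}^{p}\dddd{(-1)^j\binom{p}{j}}{s+j}=\dddd{\GGGG(s)\,p!}{\GGGG(s+p+1)}$, obtained by writing $\tfrac{1}{s+j}=\int_0^1 t^{s+j-1}\,dt$ and summing the binomial series under the integral, evaluated at $s=2-\aaaa$ identifies it with $x^{\,p+2-\aaaa}\dddd{\GGGG(2-\aaaa)\,p!}{\GGGG(p+3-\aaaa)}=\int_0^x(x-\xi)^{1-\aaaa}\xi^p\,d\xi=\GGGG(2-\aaaa)J^{2-\aaaa}x^p$. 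The order-$h$ contribution is $\tfrac12 x^{\,p+1-\aaaa}h\sum_{j=0}^{p}(-1)^j\binom{p}{j}=\tfrac12 x^{\,p+1-\aaaa}h\,(1-1)^p$, which vanishes for $p\ge1$; as $y(0)=0$ for a monomial of positive degree this equals $\tfrac12 y(0)x^{1-\aaaa}h$, and for $p=0$ the same sum equals $1$, recovering Lemma~2. Only $j=0$ survives at order $h^{2-\aaaa}$, giving $\zzzz(\aaaa-1)x^p h^{2-\aaaa}=\zzzz(\aaaa-1)y(x)h^{2-\aaaa}$, while everything else is $O(h^2)$. This proves \eqref{A3} for monomials, and linearity finishes the proof. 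The genuinely delicate point is this order counting: one must notice that the middle term $\tfrac12 n^{\beta_j}$ contributes at order $h$ for \emph{all} $j$, which is precisely why the $O(h)$ coefficient turns out proportional to $y(0)$ rather than $y(x)$, whereas the zeta term contributes at order $h^{2-\aaaa}$ only when $j=0$.
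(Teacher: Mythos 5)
Your argument is correct, but it takes a genuinely different route from the paper's. The paper also expands $y$ about $\xi=x$ (your binomial expansion of $(x-kh)^p$ is exactly that Taylor expansion with $t-x=-kh$), but it then lumps all the positive-degree terms into $y_0(\xi)=y(\xi)-y(x)$ and disposes of them in one stroke by asserting that the trapezoidal rule is second-order accurate for $(x-\xi)^{1-\aaaa}y_0(\xi)$, the extra factor of $(x-\xi)$ having tamed the endpoint singularity; only the constant part $p_0=y(x)$ is fed through Lemma 2 and the exact relation \eqref{L1}, which is where both the $\tfrac{1}{2}y(0)x^{1-\aaaa}h$ term and the $\zzzz(\aaaa-1)y(x)h^{2-\aaaa}$ term come from. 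You instead push every degree $j$ through the power-sum expansion \eqref{Expansion1} separately and resum with the Beta-integral identity $\sum_{j=0}^{p}(-1)^j\binom{p}{j}/(s+j)=\GGGG(s)\,p!/\GGGG(s+p+1)$. Your version is longer and more computational, but arguably more self-contained: the paper's step ``bounded first derivative $\Rightarrow$ trapezoid is $O(h^2)$'' concerns integrands whose \emph{second} derivative still blows up like $(x-\xi)^{-\aaaa}$ at the right endpoint, so it really rests on an Euler--Maclaurin argument for endpoint singularities (cf.\ the Sidi reference) rather than the textbook error bound, whereas your route needs nothing beyond \eqref{Expansion1}. Your bookkeeping also makes explicit structural facts that the paper obtains indirectly: the $O(h)$ coefficient is proportional to $y(0)$ because $\sum_j(-1)^j\binom{p}{j}=(1-1)^p$ kills it for $p\geq 1$, and only $j=0$ survives at order $h^{2-\aaaa}$ because $h^{2-\aaaa+j}=o(h^2)$ for $j\geq 1$. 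Both proofs identify the leading term with $\GGGG(2-\aaaa)J^{2-\aaaa}y(x)$, yours via the Beta function, the paper's via $J^{2-\aaaa}1=x^{2-\aaaa}/\GGGG(3-\aaaa)$ and linearity.
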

\begin{proof}  Let $y(\xi)$ be a polynomial of degree $m$. The Taylor polynomial for $y(\xi)$ of degree $m$ at the point $\xi=x$ is equal to $y(\xi)$. 
$$y(\xi)=p_0+p_1(x-\xi)+\cdots+p_n(x-\xi)^m=p_0+\sum_{k=1}^m p_k(x-\xi)^k.$$
Denote
$$y_0(\xi)=y(\xi)-y(x)=\sum_{k=1}^m p_k(x-\xi)^k.$$
The function $(x-\xi)^{1-\aaaa}y_0(\xi)$ has a bounded derivative on the interval $[0,x]$. The trapezoidal approximation $\mathcal{T}_{y_0,h}^{(\aaaa)}$ is a second order approximation for the fractional integral
$\GGGG(2-\aaaa)J^{(2-\aaaa)}y_0(x)$. From Lemma 2 and \eqref{L1}
$$\mathcal{T}^{(\aaaa)}_{1,h}=\dddd{x^{2-\aaaa}}{2-\aaaa}+\zzzz(\aaaa-1)h^{2-\aaaa}+O\llll(h^2 \rrrr).$$
Then
  $$\mathcal{T}_{y,h}^{(\aaaa)}=\mathcal{T}_{y_0,h}^{(\aaaa)}+p_0\mathcal{T}_{1,h}^{(\aaaa)}=\GGGG(2-\aaaa)J^{2-\aaaa}y_0(x)+\dddd{p_0}{2-\aaaa}x^{2-\aaaa}+p_0\zzzz(\aaaa-1)h^{2-\aaaa}+O\llll(h^2 \rrrr).$$
We have that $y(x)=p_0$ and $J^{2-\aaaa}1=x^{2-\aaaa}/\GGGG(3-\aaaa)$,
$$\GGGG(2-\aaaa)J^{2-\aaaa}y(x)=\GGGG(2-\aaaa)J^{2-a}y_0(x)(x)+\dddd{p_0 }{2-\aaaa}x^{2-\aaaa}.$$
Hence
 $$\mathcal{T}^{(\aaaa)}_{y,h}=\mathcal{L}^{(\aaaa)}_{y,h}-\dddd{y(0)}{2}x^{1-\aaaa}h=\GGGG(2-\aaaa)J^{2-\aaaa}y(x)+y(x)\zzzz(\aaaa-1)h^{2-\aaaa}+O\llll(h^2 \rrrr).$$
\end{proof}
In Theorem 3 we showed that \eqref{A3} is a second order approximation for the left Riemann sums and the fractional integral of order $2-\aaaa$, when the function $y(x)$ is a polynomial. From the Weierstrass Approximation Theorem every sufficiently differentiable function and its derivatives on the interval $[0,x]$ are uniform limit of polynomials. The class of functions for which Theorem 3 holds includes functions with bounded derivatives. In section 4, we present a proof for the second order approximation \eqref{A2} of the Caputo derivative.
\begin{table}[ht]
    \caption{Error and order of approximation \eqref{A3}  for   $y(x)=\cos x$ (left) and $y(x)=\ln (x+1)$ (right) on the interval $[0,1]$, when $\alpha=0.4$.}
    \begin{subtable}{0.5\linewidth}
      \centering
  \begin{tabular}{l c c }
  \hline \hline
    $h$ & $Error$ & $Order$  \\ 
		\hline \hline
$0.05$         &$0.00011853$            &$1.95822$\\
$0.025$        &$0.00003019$            &$1.97331$\\
$0.0125$       &$7.63\times10^{-6}$     &$1.98275$\\
$0.00625$      &$1.92\times10^{-6}$     &$1.98876$\\
$0.003125$     &$4.83\times10^{-7}$     &$1.99264$\\
		\hline
  \end{tabular}
    \end{subtable}%
    \begin{subtable}{.5\linewidth}
      \centering
				\quad
  \begin{tabular}{ l  c  c }
    \hline \hline
    $h$ & $Error$ &$Order$  \\ \hline \hline
$0.05$     & $0.00020451$    & $1.98580$ \\
$0.025$    & $0.00005145$    & $1.99083$ \\
$0.0125$   & $0.00001292$   & $1.99400$ \\
$0.00625$  & $3.24\times10^{-6}$   & $ 1.99606$ \\
$0.003125$ & $ 8.11\times10^{-7}$   & $1.99740$ \\       
\hline
  \end{tabular}
    \end{subtable} 
\end{table}
\section{Second Order Approximation for the Caputo Derivative}
In this section we use approximation \eqref{A3} to determine a second order discretization for the Caputo derivative of order $\aaaa$, by modifying the first three coefficients of approximation \eqref{A1} with the value of the Riemann zeta function at the point $\aaaa-1$.

Denote by $\Delta_h^1 y_n$ and $\Delta_h^2 y_n$ the forward difference and the central difference of the function $y(x)$ at the point $x_n=nh$.
$$\Delta_h^1 y_n=y_{n+1}-y_n,$$
$$\Delta_h^2 y_n=y_{n+1}-2y_n+y_{n-1}.$$
When $y(x)$ is a sufficiently differentiable function
$$y_{n+0.5}^{\prime}=\dddd{\Delta_h^1 y_n}{h}+O\llll(h^2\rrrr),\quad 
y_{n}^{\prime\prime}=\dddd{\Delta_h^2 y_n}{h^2}+O\llll(h^2\rrrr).
$$
\begin{lem}  
$$\mathcal{A}_{h} y_{n}=\sum_{k=1}^{n-1}k^{1-\aaaa}\Delta_h^2  y_{n-k}+n^{1-\aaaa}\Delta_h^1 y_0.$$
\end{lem}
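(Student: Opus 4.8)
The plan is to reduce $\mathcal{A}_h y_n$ to the form $\sum_{k=1}^{n}\rho_k^{(\aaaa)}(y_k-y_{k-1})$ already obtained in the Introduction, where $\rho_k^{(\aaaa)}=(n-k+1)^{1-\aaaa}-(n-k)^{1-\aaaa}$, and then to apply discrete integration by parts (Abel summation) once. The key elementary fact is that the partial sums of the $\rho_k^{(\aaaa)}$ telescope, $\sum_{j=1}^{k}\rho_j^{(\aaaa)}=n^{1-\aaaa}-(n-k)^{1-\aaaa}$, because $\rho_j^{(\aaaa)}=a_j-a_{j+1}$ with $a_j=(n-j+1)^{1-\aaaa}$. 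Writing $y_k-y_{k-1}=\Delta_h^1 y_{k-1}$, noting $\Delta_h^1 y_k-\Delta_h^1 y_{k-1}=\Delta_h^2 y_k$, and using $(n-n)^{1-\aaaa}=0$, Abel summation gives
$$\mathcal{A}_h y_n = n^{1-\aaaa}\,\Delta_h^1 y_{n-1}-\sum_{k=1}^{n-1}\llll(n^{1-\aaaa}-(n-k)^{1-\aaaa}\rrrr)\Delta_h^2 y_k.$$

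\textbf{Finishing the computation.} Next I would split the remaining sum into $n^{1-\aaaa}\sum_{k=1}^{n-1}\Delta_h^2 y_k$ and $-\sum_{k=1}^{n-1}(n-k)^{1-\aaaa}\Delta_h^2 y_k$. The first of these telescopes again: $\sum_{k=1}^{n-1}\Delta_h^2 y_k=\Delta_h^1 y_{n-1}-\Delta_h^1 y_0$, and the term $n^{1-\aaaa}\Delta_h^1 y_{n-1}$ cancels, leaving precisely $n^{1-\aaaa}\Delta_h^1 y_0$. In the second piece the substitution $j=n-k$ (so $j$ runs from $n-1$ down to $1$) turns $(n-k)^{1-\aaaa}\Delta_h^2 y_k$ into $j^{1-\aaaa}\Delta_h^2 y_{n-j}$, which is exactly the stated sum. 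Collecting the two contributions yields
$$\mathcal{A}_h y_n = \sum_{k=1}^{n-1}k^{1-\aaaa}\,\Delta_h^2 y_{n-k}+n^{1-\aaaa}\,\Delta_h^1 y_0,$$
which is the claim.

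\textbf{Alternative self-contained route.} One may avoid quoting the $\rho_k^{(\aaaa)}$ identity altogether: expand the claimed right-hand side using $\Delta_h^2 y_{n-k}=y_{n-k+1}-2y_{n-k}+y_{n-k-1}$, reindex each of the three resulting sums together with the boundary term $n^{1-\aaaa}(y_1-y_0)$, and read off the coefficient of each $y_m$, $0\le m\le n$. One checks that this coefficient is $1$ for $m=n$, is $(n-m+1)^{1-\aaaa}-2(n-m)^{1-\aaaa}+(n-m-1)^{1-\aaaa}$ for $1\le m\le n-1$ (for $m=1$ the ``missing'' leading contribution is supplied by the boundary term), and is $(n-1)^{1-\aaaa}-n^{1-\aaaa}$ for $m=0$ --- that is, $\ssss_{n-m}^{(\aaaa)}$ in every case, so the right-hand side equals $\sum_{k=0}^{n}\ssss_k^{(\aaaa)}y_{n-k}=\mathcal{A}_h y_n$.

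\textbf{Main obstacle.} There is no genuine obstacle here; the statement is a bookkeeping identity. The only place that demands care is index management in the summation-by-parts step and keeping the endpoints of the two telescoping sums straight, together with the degenerate small cases ($n=1,2$), which are handled cleanly by adopting the conventions $0^{1-\aaaa}=0$ and treating empty sums as zero.
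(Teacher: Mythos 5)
Your proposal is correct; I verified both routes, including the endpoint bookkeeping (the cancellation of $n^{1-\aaaa}\Delta_h^1 y_{n-1}$, the reindexing $j=n-k$, and the degenerate cases $n=1,2$ under the convention $0^{1-\aaaa}=0$). The underlying idea in all versions is the same discrete integration by parts, but your organization differs from the paper's. The paper works forward from $\mathcal{A}_h y_n=\sum_{k=0}^n\ssss_k^{(\aaaa)}y_{n-k}$: it expands the coefficient $\ssss_k^{(\aaaa)}=(k-1)^{1-\aaaa}-2k^{1-\aaaa}+(k+1)^{1-\aaaa}$ into three sums, shifts the index by $\pm1$ in two of them, and regroups to move the second difference from the weights $k^{1-\aaaa}$ onto the values $y_{n-k}$. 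Your primary route instead returns to the representation $\mathcal{A}_h y_n=\sum_{k=1}^n\rho_k^{(\aaaa)}(y_k-y_{k-1})$ from the Introduction, which already carries one difference on the $y$'s, and performs a single Abel summation using the telescoping partial sums $\sum_{j=1}^k\rho_j^{(\aaaa)}=n^{1-\aaaa}-(n-k)^{1-\aaaa}$; this is arguably cleaner because only one summation by parts is needed and the boundary term $n^{1-\aaaa}\Delta_h^1 y_0$ emerges automatically rather than having to be matched against $\ssss_n^{(\aaaa)}=(n-1)^{1-\aaaa}-n^{1-\aaaa}$ at the end. (One small caution: the identity $\mathcal{A}_h y_n=\sum_{k=1}^n\rho_k^{(\aaaa)}(y_k-y_{k-1})$ is exact and follows from the definitions $\ssss_0^{(\aaaa)}=\rho_n^{(\aaaa)}$, $\ssss_k^{(\aaaa)}=\rho_{n-k}^{(\aaaa)}-\rho_{n-k+1}^{(\aaaa)}$, $\ssss_n^{(\aaaa)}=-\rho_1^{(\aaaa)}$, even though the Introduction displays it inside a chain of approximations and contains a sign-of-index typo there; you should state that you are using the exact algebraic identity, not the approximation.) Your alternative route --- expanding the claimed right-hand side and matching the coefficient of each $y_m$ --- is essentially the paper's computation run in reverse and is a perfectly good self-contained check.
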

\begin{proof} 
\begin{align*}
\mathcal{A}_{h}& y_{n}=\sum_{k=0}^n\ssss_k^{(\aaaa)}y_{n-k}=\ssss_0^{(\aaaa)}y_{n}+\sum_{k=1}^{n-1}\ssss_k^{(\aaaa)}y_{n-k}+\ssss_n^{(\aaaa)}y_{0}\\
=&y_{n}+\sum_{k=1}^{n-1}\llll((k-1)^{1-\aaaa}-2k^{1-\aaaa}+(k+1)^{1-\aaaa}\rrrr)y_{n-k}+\ssss_n^{(\aaaa)}y_{0}\\
=&y_{n}+\sum_{k=1}^{n-1}(k-1)^{1-\aaaa}y_{n-k}-2\sum_{k=1}^{n-1}k^{1-\aaaa}y_{n-k}+\sum_{k=1}^{n-1}(k+1)^{1-\aaaa}y_{n-k}+\ssss_n^{(\aaaa)}y_{0}.
\end{align*}
Substitute $K=k-1$ in the first sum and $K=k+1$ in the third sum
\begin{align*}
\mathcal{A}_{h} y_{n}=y_{n}+\sum_{K=1}^{n-2} K^{1-\aaaa}y_{n-K-1}-2\sum_{k=1}^{n-1}k^{1-\aaaa}y_{n-k}+\sum_{K=2}^{n}K^{1-\aaaa}y_{n-K+1}+\ssss_n^{(\aaaa)}y_{0}.
\end{align*}
We have that
$$\sum_{k=1}^{n-2} k^{1-\aaaa}y_{n-k-1}=\sum_{k=1}^{n-1} k^{1-\aaaa}y_{n-k-1}-(n-1)^{1-\aaaa}y_0,$$
$$y_n+\sum_{k=2}^{n} k^{1-\aaaa}y_{n-k+1}=\sum_{k=1}^{n} k^{1-\aaaa}y_{n-k+1}=\sum_{k=1}^{n-1} k^{1-\aaaa}y_{n-k+1}+n^{1-\aaaa}y_1.$$
Then
$$\mathcal{A}_{h} y_{n}=\sum_{k=1}^{n-1} k^{1-\aaaa}\llll(y_{n-k+1}-2y_{n-k}+y_{n-k-1}\rrrr)+n^{1-\aaaa}\llll(y_1-y_0\rrrr),$$
because $\ssss_n^{(\aaaa)}=(n-1)^{1-\aaaa}-n^{1-\aaaa}$.
\end{proof}
\begin{lem} Suppose that $y(x)$ is sufficiently differentiable function on $[0,nh]$ 
$$\dddd{1}{h^\aaaa}\mathcal{A}_{h} y_{n}=h^{2-\aaaa}\sum_{k=1}^{n-1}k^{1-\aaaa}  y^{\prime\prime}_{n-k}+(nh)^{1-\aaaa} y^\prime_{0.5}+O\llll(h^2\rrrr).$$
\end{lem}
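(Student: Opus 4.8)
The plan is to derive this estimate directly from the previous lemma together with the difference formulas for $y'$ and $y''$ stated above. Dividing the identity $\mathcal{A}_h y_n=\sum_{k=1}^{n-1}k^{1-\aaaa}\Delta_h^2 y_{n-k}+n^{1-\aaaa}\Delta_h^1 y_0$ by $h^\aaaa$, I would replace each finite difference by its derivative approximation together with the Taylor remainder: when $y\in C^4[0,nh]$ one has $\Delta_h^2 y_{n-k}=h^2 y''_{n-k}+O(h^4)$ and $\Delta_h^1 y_0=h\,y'_{0.5}+O(h^3)$. (This is what ``sufficiently differentiable'' should be taken to mean here: the central-difference formula for $y''$ carries a fourth-order remainder and the midpoint forward difference for $y'$ a third-order one.) Substituting gives
$$\dddd{1}{h^\aaaa}\mathcal{A}_h y_n=h^{2-\aaaa}\sum_{k=1}^{n-1}k^{1-\aaaa}y''_{n-k}+(nh)^{1-\aaaa}y'_{0.5}+R_1+R_2,$$
with $R_1=h^{-\aaaa}\sum_{k=1}^{n-1}k^{1-\aaaa}O(h^4)$ and $R_2=h^{-\aaaa}n^{1-\aaaa}O(h^3)$, so it remains only to show $R_1=O(h^2)$ and $R_2=O(h^2)$.

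For $R_1$ the key ingredient is the bound $\sum_{k=1}^{n-1}k^{1-\aaaa}=O(n^{2-\aaaa})$, which is immediate from the leading term of the expansion \eqref{Expansion1} already used in Lemma 2. Since $x=nh$ is fixed, $n=x/h$, hence
$$R_1=O\llll(h^{4-\aaaa}n^{2-\aaaa}\rrrr)=O\llll(h^{4-\aaaa}(x/h)^{2-\aaaa}\rrrr)=O\llll(x^{2-\aaaa}h^2\rrrr),$$
and similarly $n^{1-\aaaa}h^{3-\aaaa}=(x/h)^{1-\aaaa}h^{3-\aaaa}=x^{1-\aaaa}h^2$, so $R_2=O(h^2)$. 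Adding the two estimates to the main terms gives the stated formula.

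The only delicate point is the error bookkeeping in $R_1$: each of the $n-1$ second-difference remainders is individually $O(h^4)$, but there are $n\sim x/h$ of them, weighted by $k^{1-\aaaa}$, so one must verify that the accumulated weighted error is still $O(h^2)$ and not larger --- which is exactly what the sum estimate $\sum k^{1-\aaaa}=O(n^{2-\aaaa})$ provides. In a careful write-up I would keep the implied constants uniform by bounding $\|y'''\|_\infty$ and $\|y^{(4)}\|_\infty$ on $[0,x]$ and absorbing them into the $O$-constants; everything else is routine substitution and collection of terms.
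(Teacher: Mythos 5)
Your proposal is correct and follows essentially the same route as the paper's own proof: start from Lemma 4, replace $\Delta_h^2 y_{n-k}$ and $\Delta_h^1 y_0$ by their derivative approximations with Taylor remainders, and control the accumulated error using $\sum_{k=1}^{n-1}k^{1-\alpha}=O\left(n^{2-\alpha}\right)$ from expansion \eqref{Expansion1} together with $nh=x$ fixed. The only difference is that you are more explicit about the required smoothness and the uniformity of the implied constants, which the paper leaves implicit.
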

\begin{proof} From Lemma 4
$$\dddd{1}{h^\aaaa}\mathcal{A}_{h} y_{n}=\sum_{k=1}^{n-1}k^{1-\aaaa}\Delta_h^2  y_{n-k}+n^{1-\aaaa}\Delta_h^1 y_0=h^{2-\aaaa}\sum_{k=1}^{n-1}k^{1-\aaaa}\dddd{\Delta_h^2}{h^2}  y_{n-k}+n^{1-\aaaa}h^{1-\aaaa}\dddd{\Delta_h^1 y_0}{h},$$
$$\dddd{1}{h^\aaaa}\mathcal{A}_{h} y_{n}=h^{2-\aaaa}\sum_{k=1}^{n-1}k^{1-\aaaa}\llll(  y^{\prime\prime}_{n-k}+O\llll(h^2\rrrr)\rrrr)+
(nh)^{1-\aaaa}\llll(  y^{\prime}_{0.5}+O\llll(h^2\rrrr)\rrrr),$$
$$\dddd{1}{h^\aaaa}\mathcal{A}_{h} y_{n}=h^{2-\aaaa}\sum_{k=1}^{n-1}k^{1-\aaaa}  y^{\prime\prime}_{n-k}+
(n h)^{1-\aaaa}  y^{\prime}_{0.5}+O\llll(h^2\rrrr)\llll((n h)^{1-\aaaa}+h^{2-\aaaa}\sum_{k=1}^{n-1}k^{1-\aaaa}  \rrrr).$$
The number $(n h)^{1-\aaaa}\sim O(1)$ is bounded. From  \eqref{Expansion1} we have
$$h^{2-\aaaa}\sum_{k=1}^{n-1}k^{1-\aaaa}\sim h^{2-\aaaa}O\llll(n^{2-\aaaa}\rrrr)\sim O(1).$$
Therefore
$$\dddd{1}{h^\aaaa}\mathcal{A}_{h} y_{n}=h^{2-\aaaa}\sum_{k=1}^{n-1}k^{1-\aaaa}  y^{\prime\prime}_{n-k}+
(n h)^{1-\aaaa}  y^{\prime}_{0.5}+O\llll(h^2\rrrr).$$
\end{proof}
\begin{thm} Let $y$ be a polynomial and $x=nh$.
$$\dddd{1}{h^\aaaa}\mathcal{A}_{h} y(x)=\GGGG(2-\aaaa)y^{(\aaaa)}_n+\zzzz(\aaaa-1)y^{\prime\prime}(x)h^{2-\aaaa}+O\llll(h^2\rrrr).
$$
\begin{proof} From Lemma 5
\begin{align*}
\dddd{1}{h^\aaaa}\mathcal{A}_{h} y(x)=&h^{2-\aaaa}\sum_{k=1}^{n-1}k^{1-\aaaa}  y^{\prime\prime}_{n-k}+(nh)^{1-\aaaa} y^\prime_{0.5}+O\llll(h^2\rrrr)=\\
&h^{2-\aaaa}\sum_{k=1}^{n}k^{1-\aaaa}  y^{\prime\prime}_{n-k}-h^{2-\aaaa}n^{1-\aaaa}y_0^{\prime\prime}+x^{1-\aaaa} y^\prime_{0.5}+O\llll(h^2\rrrr).
\end{align*}
Then
$$\dddd{1}{h^\aaaa}\mathcal{A}_{h} y(x)=\mathcal{L}^{(\aaaa)}_{y^{\prime\prime},h}-
x^{1-\aaaa}\llll(h y_0^{\prime\prime}-y^\prime_{0.5}\rrrr)+O\llll(h^2\rrrr).$$
From Claim 1 and Theorem 3
$$\mathcal{L}^{(\aaaa)}_{y^{\prime\prime},h}=\GGGG(2-\aaaa)J^{2-\aaaa}y^{\prime\prime}(x)+\dddd{y^{\prime\prime}(0)}{2}x^{1-\aaaa}h+\zzzz(\aaaa-1)y^{\prime\prime}(x)h^{2-\aaaa}+O\llll(h^2\rrrr),$$
$$\GGGG(2-\aaaa)y^{(\aaaa)}(x)=\GGGG(2-\aaaa)J^{2-\aaaa}y''(x)+y'(0)x^{1-\aaaa}.$$ 
Then
$$\mathcal{L}^{(\aaaa)}_{y^{\prime\prime},h}=\GGGG(2-\aaaa)y^{(\aaaa)}(x)+\zzzz(\aaaa-1)y^{\prime\prime}(x)h^{2-\aaaa}-x^{1-\aaaa}\llll(y'_0-\dddd{y^{\prime\prime}_0 h}{2}\rrrr)+O\llll(h^2\rrrr),$$
$$\dddd{1}{h^\aaaa}\mathcal{A}_{h} y(x)=\GGGG(2-\aaaa)y^{(\aaaa)}(x)+\zzzz(\aaaa-1)y^{\prime\prime}(x)h^{2-\aaaa}-x^{1-\aaaa}\llll(y'_0+\dddd{y^{\prime\prime}_0 h}{2}-y^\prime_{0.5}\rrrr)+O\llll(h^2\rrrr).$$
By Taylor's expansion
$$y'_0+\dddd{y^{\prime\prime}_0 h}{2}-y^\prime_{0.5}=O\llll(h^2\rrrr).$$
Hence
$$\dddd{1}{h^\aaaa}\mathcal{A}_{h} y(x)=\GGGG(2-\aaaa)y^{(\aaaa)}(x)+\zzzz(\aaaa-1)y^{\prime\prime}(x)h^{2-\aaaa}+O\llll(h^2\rrrr).$$
\end{proof}
\end{thm}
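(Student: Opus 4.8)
The plan is to assemble the identity from three ingredients already in hand: the asymptotic expansion of $\dddd{1}{h^\aaaa}\mathcal{A}_h y(x)$ from Lemma~5, the fractional-integral approximation \eqref{A3} from Theorem~3 applied to $y''$, and the representation of the Caputo derivative via $y''$ from Claim~1. First I would take the output of Lemma~5,
$$\dddd{1}{h^\aaaa}\mathcal{A}_{h} y(x)=h^{2-\aaaa}\sum_{k=1}^{n-1}k^{1-\aaaa}y^{\prime\prime}_{n-k}+(nh)^{1-\aaaa}y^\prime_{0.5}+O\llll(h^2\rrrr),$$
and rewrite the truncated sum as a full left Riemann sum of $y''$: adding and subtracting the $k=n$ term (whose value is $h^{2-\aaaa}n^{1-\aaaa}y''_0=x^{1-\aaaa}y''_0\,h=O(h)$, since $y''$ is a fixed polynomial and $x$ is fixed) gives
$$\dddd{1}{h^\aaaa}\mathcal{A}_{h} y(x)=\mathcal{L}^{(\aaaa)}_{y^{\prime\prime},h}-x^{1-\aaaa}\llll(h\,y_0^{\prime\prime}-y^\prime_{0.5}\rrrr)+O\llll(h^2\rrrr).$$

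Next I would apply Theorem~3 to $y''$, which is a polynomial because $y$ is, to obtain
$$\mathcal{L}^{(\aaaa)}_{y^{\prime\prime},h}=\GGGG(2-\aaaa)J^{2-\aaaa}y^{\prime\prime}(x)+\dddd{y^{\prime\prime}(0)}{2}x^{1-\aaaa}h+\zzzz(\aaaa-1)y^{\prime\prime}(x)h^{2-\aaaa}+O\llll(h^2\rrrr),$$
and then use Claim~1 in the form $\GGGG(2-\aaaa)J^{2-\aaaa}y''(x)=\GGGG(2-\aaaa)y^{(\aaaa)}(x)-y'(0)x^{1-\aaaa}$ to trade the fractional integral for the Caputo derivative. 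Substituting both back into the displayed expression, every term carrying the factor $x^{1-\aaaa}$ collects into $-x^{1-\aaaa}\bigl(y'_0+\tfrac12 y''_0 h-y'_{0.5}\bigr)$, leaving $\GGGG(2-\aaaa)y^{(\aaaa)}(x)+\zzzz(\aaaa-1)y^{\prime\prime}(x)h^{2-\aaaa}$ together with this boundary remainder and an $O(h^2)$ term.

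The one genuinely delicate point is to show the boundary remainder is $O(h^2)$ rather than merely $O(h)$. This is precisely a second-order one-sided Taylor expansion of $y'$ at $x_0=0$ evaluated at the half-node $x_{0.5}=h/2$: $y'_{0.5}=y'_0+\tfrac12 y''_0 h+O(h^2)$, so $y'_0+\tfrac12 y''_0 h-y'_{0.5}=O(h^2)$, and since $x^{1-\aaaa}$ is bounded on the fixed interval $[0,x]$ the whole boundary term is $O(h^2)$ and is absorbed into the error. I do not expect any other obstacle: the remainders stay under control because the $O(h^2)$ in Lemma~5 already accounts for the summed difference-quotient errors (via $h^{2-\aaaa}\sum_{k=1}^{n-1}k^{1-\aaaa}\sim O(1)$ and $(nh)^{1-\aaaa}\sim O(1)$), so no further summation of small errors is introduced, and the remaining steps are a purely linear assembly of Lemma~5, Theorem~3, and Claim~1 with the Taylor estimate.
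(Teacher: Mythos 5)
Your proposal is correct and follows essentially the same route as the paper: it starts from Lemma~5, completes the truncated sum to the full left Riemann sum $\mathcal{L}^{(\aaaa)}_{y'',h}$ by adding and subtracting the $k=n$ term, applies Theorem~3 to the polynomial $y''$ together with Claim~1, and disposes of the collected boundary term $x^{1-\aaaa}\llll(y'_0+\tfrac12 y''_0 h-y'_{0.5}\rrrr)$ by the same Taylor expansion of $y'$ at the half-node. You correctly identify the Taylor estimate at $x_{0.5}$ as the one delicate point, and your handling of it matches the paper's.
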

In Theorem 6 we determined the second order approximation for the Caputo derivative
\begin{equation}\label{A5}
y^{(\aaaa)}_n=\dddd{1}{\GGGG(2-\aaaa)h^\aaaa}\mathcal{A}_{h} y_n-\dddd{\zzzz(\aaaa-1)}{\GGGG(2-\aaaa)}y^{\prime\prime}_n h^{2-\aaaa}+O\llll(h^2\rrrr).
\end{equation}
\begin{cor} Let $y(x)$ be a polynomial.
\begin{equation*}
y^{(\aaaa)}_n=\dddd{1}{\GGGG(2-\aaaa)h^\aaaa}\sum_{k=0}^n \delta_k^{(\aaaa)} y_{n-k}+O\llll(h^{2}\rrrr),
\end{equation*}
where $\dddddd_k^{(\aaaa)}=\ssss_k^{(\aaaa)}$ for $2\leq k\leq n$ and
$$\dddddd_0^{(\aaaa)}=\ssss_0^{(\aaaa)}-\zzzz(\aaaa-1),\; \dddddd_1^{(\aaaa)}=\ssss_1^{(\aaaa)}+2\zzzz(\aaaa-1),\; \dddddd_2^{(\aaaa)}=\ssss_2^{(\aaaa)}-\zzzz(\aaaa-1).$$
\end{cor}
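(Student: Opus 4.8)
The plan is to deduce Corollary 7 directly from Theorem 6. Equation \eqref{A5} already expresses $y^{(\aaaa)}_n$ through the nodal values via $\mathcal{A}_h y_n=\sum_{k=0}^n\ssss_k^{(\aaaa)}y_{n-k}$ together with the single non-nodal term $\dfrac{\zzzz(\aaaa-1)}{\GGGG(2-\aaaa)}y^{\prime\prime}_n h^{2-\aaaa}$, so the only work is to replace $y^{\prime\prime}_n$ by a finite difference in $y_n,y_{n-1},y_{n-2}$ and then collect coefficients.

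First I would use the one-sided (backward) second difference
$$y^{\prime\prime}_n=\dddd{y_n-2y_{n-1}+y_{n-2}}{h^2}+O(h),$$
which is valid for the polynomial $y$ by Taylor expansion at $x_n$. Multiplying by $h^{2-\aaaa}$ gives
$$y^{\prime\prime}_n h^{2-\aaaa}=\dddd{y_n-2y_{n-1}+y_{n-2}}{h^\aaaa}+O\llll(h^{3-\aaaa}\rrrr),$$
and since $0<\aaaa<1$ we have $3-\aaaa>2$, so the remainder is $O\llll(h^2\rrrr)$. Substituting this into \eqref{A5} therefore keeps the $O\llll(h^2\rrrr)$ accuracy; this is exactly why the cheap three-point stencil, which approximates $y^{\prime\prime}$ only to $O(h)$, is adequate here — the prefactor $h^{2-\aaaa}$ makes up the missing order.

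Finally I would merge the two nodal sums. From \eqref{A5}, using that $\zzzz(\aaaa-1)/\GGGG(2-\aaaa)$ is a constant,
$$y^{(\aaaa)}_n=\dddd{1}{\GGGG(2-\aaaa)h^\aaaa}\llll(\sum_{k=0}^n\ssss_k^{(\aaaa)}y_{n-k}-\zzzz(\aaaa-1)\llll(y_n-2y_{n-1}+y_{n-2}\rrrr)\rrrr)+O\llll(h^2\rrrr).$$
Only the coefficients of $y_n,y_{n-1},y_{n-2}$ change: that of $y_n$ becomes $\ssss_0^{(\aaaa)}-\zzzz(\aaaa-1)=\dddddd_0^{(\aaaa)}$, that of $y_{n-1}$ becomes $\ssss_1^{(\aaaa)}+2\zzzz(\aaaa-1)=\dddddd_1^{(\aaaa)}$, that of $y_{n-2}$ becomes $\ssss_2^{(\aaaa)}-\zzzz(\aaaa-1)=\dddddd_2^{(\aaaa)}$, and $\dddddd_k^{(\aaaa)}=\ssss_k^{(\aaaa)}$ for the remaining indices (with $n\geq 2$ so that $y_{n-2}$ is defined), which is exactly the asserted formula for $\dddddd_k^{(\aaaa)}$. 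The whole argument is substitution followed by rearrangement, so there is no genuine obstacle; the one point requiring attention is the elementary order count $h^{2-\aaaa}\cdot O(h)=O\llll(h^{3-\aaaa}\rrrr)\subseteq O\llll(h^2\rrrr)$, valid for every $\aaaa\in(0,1)$, which is precisely what lets one trade a higher order difference stencil for the minimal one.
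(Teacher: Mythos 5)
Your proposal is correct and follows essentially the same route as the paper: substitute the backward second difference $y^{\prime\prime}_n=(y_n-2y_{n-1}+y_{n-2})/h^2+O(h)$ into \eqref{A5}, note that the $O(h)$ error times the prefactor $h^{2-\aaaa}$ is $O\llll(h^{3-\aaaa}\rrrr)\subseteq O\llll(h^2\rrrr)$, and collect the coefficients of $y_n$, $y_{n-1}$, $y_{n-2}$. Your explicit remark on the order count $h^{2-\aaaa}\cdot O(h)$ just makes transparent what the paper leaves implicit.
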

\begin{proof} The second order backward difference approximation for the second derivative $y^{\prime\prime}_n$ has accuracy $O(h)$.
$$y^{\prime\prime}_n=\dddd{y_{n}-2y_{n-1}+y_{n-2}}{h^2}+O(h).$$
From approximation \eqref{A5}
$$y^{(\aaaa)}_n=\dddd{1}{\GGGG(2-\aaaa)h^\aaaa}\mathcal{A}_{h} y_n-\dddd{\zzzz(\aaaa-1)}{\GGGG(2-\aaaa)}\llll(\dddd{y_{n}-2y_{n-1}+y_{n-2}}{h^2}+O(h)\rrrr) h^{2-\aaaa}+O\llll(h^2\rrrr),$$
$$y^{(\aaaa)}_n=\dddd{1}{\GGGG(2-\aaaa)h^\aaaa}\mathcal{A}_{h} y_n-\dddd{\zzzz(\aaaa-1)}{\GGGG(2-\aaaa) h^{\aaaa}}\llll(y_{n}-2y_{n-1}+y_{n-2}\rrrr)+O\llll(h^2\rrrr),$$
$$y^{(\aaaa)}_n=\dddd{1}{\GGGG(2-\aaaa)h^\aaaa}\llll(\sum_{k=0}^{n} \ssss_k^{(\alpha)} y_{n-k}-\zzzz(\aaaa-1)\llll(y_{n}-2y_{n-1}+y_{n-2}\rrrr)\rrrr)+O\llll(h^2\rrrr).$$
\end{proof}
\begin{table}[ht]
    \caption{Error and order of  approximation \eqref{A2} for  Caputo derivative of order  $\alpha=0.25$ and $y(x)=\cos x$ (left),  $\;y(x)=\ln (x+1)$ (right) on  $[0,1]$. }
    \begin{subtable}{0.5\linewidth}
      \centering
  \begin{tabular}{l c c }
  \hline \hline
    $h$ & $Error$ & $Order$  \\ 
		\hline \hline
$0.05$         &$0.000081955$           &$2.30047$\\
$0.025$        &$0.000017556$     &$2.22284$\\
$0.0125$       &$3.95\times10^{-6}$     &$2.15376$\\
$0.00625$      &$9.20\times10^{-7}$     &$2.10073$\\
$0.003125$     &$2.20\times10^{-7}$     &$2.06368$\\
		\hline
  \end{tabular}
    \end{subtable}%
    \begin{subtable}{.5\linewidth}
      \centering
				\quad
  \begin{tabular}{ l  c  c }
    \hline \hline
    $h$ & $Error$ &$Order$  \\ \hline \hline
$0.05$         &$0.000029455$          &$2.31171$\\
$0.025$        &$6.39\times10^{-6}$     &$2.20475$\\
$0.0125$       &$1.46\times10^{-6}$     &$2.13162$\\
$0.00625$      &$3.44\times10^{-7}$     &$2.08272$\\
$0.003125$     &$8.31\times10^{-8}$     &$2.05103$\\      
\hline
  \end{tabular}
    \end{subtable} 
\end{table}
Denote
$$\mathcal{B}_h y(x)=\sum_{k=0}^n \delta_k^{(\aaaa)} y(x-kh).$$
In Corollary 7 we showed that \eqref{A2} is a second order approximation for the Caputo derivative of polynomials. Now we use the Weierstrass Approximation Theorem to extend the result to differentiable functions.
\begin{thm} Let $x=nh$ and $y$ be a sufficiently differentiable function.
$$y^{(\aaaa)}(x)=\dddd{1}{\GGGG(2-\aaaa)h^\aaaa}\mathcal{B}_h y(x)+O\llll(h^{2}\rrrr).$$
\end{thm}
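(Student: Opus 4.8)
The plan is to obtain the statement from Corollary~7 by approximating $y$ with polynomials, exploiting that along the route to Corollary~7 the polynomial hypothesis entered only once. Indeed, in the chain Lemma~4 $\to$ Lemma~5 $\to$ Theorem~6 $\to$ Corollary~7, every step other than the application of Theorem~3 (to $y''$, inside the proof of Theorem~6) uses only Taylor's formula and the two boundedness facts $(nh)^{1-\aaaa}=O(1)$ and $h^{2-\aaaa}\sum_{k=1}^{n-1}k^{1-\aaaa}=O(1)$ already established there. So it is enough to upgrade Theorem~3 from polynomials to sufficiently differentiable $y$ (it is convenient to take $y\in C^{4}[0,x]$), with an $O(h^{2})$ term whose implied constant does not depend on $h$; once this is in hand, Claim~1, Lemmas~4--5, Theorem~6 and Corollary~7 go through verbatim and give $y^{(\aaaa)}(x)=\frac{1}{\GGGG(2-\aaaa)h^{\aaaa}}\mathcal{B}_{h}y(x)+O(h^{2})$.

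For the limiting argument, fix $x=nh$ and view the error as a linear functional of $y$: for Theorem~3 put $\mathcal{F}_{h}[y]=\mathcal{L}^{(\aaaa)}_{y,h}-\GGGG(2-\aaaa)J^{2-\aaaa}y(x)-\frac{1}{2}y(0)x^{1-\aaaa}h-\zzzz(\aaaa-1)y(x)h^{2-\aaaa}$, and for the present theorem put $\mathcal{E}_{h}[y]=y^{(\aaaa)}(x)-\frac{1}{\GGGG(2-\aaaa)h^{\aaaa}}\mathcal{B}_{h}y(x)$. For a fixed $h$, $\mathcal{L}^{(\aaaa)}_{y,h}$ and $\mathcal{B}_{h}y(x)$ are finite linear combinations of point values of $y$, while $J^{2-\aaaa}y(x)$ and $y^{(\aaaa)}(x)=\frac{1}{\GGGG(1-\aaaa)}\int_{0}^{x}y'(\xi)(x-\xi)^{-\aaaa}d\xi$ are integrals of $y$, respectively $y'$, against integrable kernels; hence $\mathcal{F}_{h}$ is continuous in the sup-norm on $C[0,x]$ and $\mathcal{E}_{h}$ is continuous in the $C^{1}$-norm. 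Now, given $y\in C^{4}[0,x]$, the Weierstrass Approximation Theorem (applied to $y^{(4)}$ and integrating four times, matching $y(0),\dots,y^{(3)}(0)$) produces polynomials $p_{m}$ with $p_{m}^{(j)}\to y^{(j)}$ uniformly on $[0,x]$ for $0\le j\le 4$. By Corollary~7, $|\mathcal{E}_{h}[p_{m}]|\le C(p_{m})h^{2}$; if one shows $C(p_{m})$ can be taken to be a fixed expression in finitely many sup-norms of $p_{m}$ and its derivatives, then letting $m\to\infty$ and using continuity of $\mathcal{E}_{h}$ for fixed $h$ gives $|\mathcal{E}_{h}[y]|\le C(y)h^{2}$ with $C(y)$ independent of $h$, which is the claim.

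The crux, and the step I expect to be the main obstacle, is therefore the bookkeeping showing that the $O(h^{2})$ constant in Theorem~3, and hence in Corollary~7, depends only on finitely many sup-norms of derivatives of the polynomial --- not on its degree, and not on $h$ or $n$. For Theorem~3 one decomposes $y=(y-y(x))+y(x)$ and identifies $\mathcal{F}_{h}[y]$ with the composite trapezoidal error of $g(\xi)=(x-\xi)^{1-\aaaa}\bigl(y(\xi)-y(x)\bigr)$ plus $y(x)\cdot O(h^{2})$ coming from Lemma~2; since $y(\xi)-y(x)$ vanishes to first order at $\xi=x$, the kernel singularity is integrable, the last subinterval contributes only $O(h^{3-\aaaa})=o(h^{2})$, and summing the remaining subinterval errors gives a bound $C(x,\aaaa)\bigl(\|y\|_{\infty}+\|y'\|_{\infty}+\|y''\|_{\infty}\bigr)h^{2}$. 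For Corollary~7 one checks likewise that the Taylor remainders occurring in Lemma~5 and in the backward-difference formula for $y''_{n}$ are $O(h^{2})$, respectively $O(h)$, with constants controlled by $\|y'''\|_{\infty}$ and $\|y^{(4)}\|_{\infty}$, and that the factors $(nh)^{1-\aaaa}$ and $h^{2-\aaaa}\sum_{k}k^{1-\aaaa}$ multiplying them stay $O(1)$ by \eqref{Expansion1}. With these estimates $C(p_{m})\to C(x,\aaaa)\bigl(\|y\|_{\infty}+\|y'\|_{\infty}+\dots+\|y^{(4)}\|_{\infty}\bigr)$, the limit passes, and $\mathcal{E}_{h}[y]=O(h^{2})$ uniformly in $h$, completing the proof.
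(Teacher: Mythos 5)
Your proposal follows the same overall skeleton as the paper's proof of Theorem 8 --- approximate $y$ by polynomials via the Weierstrass theorem and pass to the limit in Corollary 7 --- but it diverges at exactly the point where the paper's argument is weakest, and your version is the more defensible one. The paper approximates $y'$ uniformly by a polynomial $p_\epsilon$, sets $q_\epsilon(t)=y(0)+\int_0^t p_\epsilon(\xi)\,d\xi$, shows that for fixed $h$ one has $q_\epsilon^{(\alpha)}(x)\to y^{(\alpha)}(x)$ and $\mathcal{B}_h q_\epsilon(x)\to \mathcal{B}_h y(x)$ as $\epsilon\to 0$, and then ``lets $\epsilon\to 0$'' in the identity $q_\epsilon^{(\alpha)}(x)=\frac{1}{\Gamma(2-\alpha)h^\alpha}\mathcal{B}_h q_\epsilon(x)+O(h^2)$. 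As you anticipated, that last step is not justified as written: the implied constant in the $O(h^2)$ of Corollary 7 depends on the polynomial (through the sizes of its higher derivatives, which need not stay bounded as $\epsilon\to 0$), so pointwise convergence at each fixed $h$ does not by itself produce a bound $C\,h^2$ with $C$ independent of $h$ for the limit function. Your plan --- approximate $y^{(4)}$ and integrate four times so that $p_m^{(j)}\to y^{(j)}$ uniformly for $j\le 4$, and verify that the $O(h^2)$ constant in Theorem 3, Lemma 5 and Corollary 7 is controlled by finitely many sup-norms $\|p_m^{(j)}\|_\infty$ and hence stays bounded along the approximating sequence --- supplies precisely the missing uniformity; the trapezoidal-error analysis you sketch for $(x-\xi)^{1-\alpha}\bigl(y(\xi)-y(x)\bigr)$ (integrable singularity of the second derivative, $O(h^{3-\alpha})$ from the last subinterval) is the standard endpoint-singularity Euler--Maclaurin estimate and does yield a constant of the advertised form, and your observation that the polynomial hypothesis enters the chain only through Theorem 3 is correct. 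The only reservation is that you state rather than fully execute this bookkeeping, but each individual estimate you list is routine; in substance your proposal not only proves the theorem but repairs a genuine gap in the paper's own proof.
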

\begin{proof} By the Weierstrass Approximation Theorem every continuous function is a uniform limit of polynomials. Let $\eeee>0$ and $p_\eeee(x)$ be a polynomial such that
$$\llll|y'(t)-p_\eeee(t)\rrrr|<\eeee,$$
for all $t\in [0,x]$. Define
$$q_\eeee(t)=y(0)+\int_0^t p_\eeee(\xi)d\xi.$$
The function $q_\eeee(t)$ is polynomial, and $q^\prime_\eeee(t)= p^\prime_\eeee(t)$. We have that
$$\llll|y(t)-q_\eeee(t)\rrrr|=\llll|\int_0^t \llll(y^\prime(\xi)-p_\eeee(\xi) \rrrr)d\xi\rrrr|\leq
\int_0^t \llll|y^\prime(\xi)-p_\eeee(\xi)\rrrr| d\xi<\int_0^x \eeee d\xi\leq x\eeee,$$
for all $t\in [0,x]$.
$$\bullet\;\Gamma (1-\aaaa)\llll|y^{(\aaaa)}(x)-q_\eeee^{(\aaaa)}(x)\rrrr|=\llll|\int_0^x \dddd{y^\prime(\xi)-q^\prime_\eeee(\xi)}{(x-\xi)^{\aaaa}}d\xi\rrrr|\leq \int_0^x \dddd{\llll|y^\prime(\xi)-p_\eeee(\xi)\rrrr|}{(x-\xi)^{\aaaa}}d\xi,$$
$$\llll|y^{(\aaaa)}(x)-q_\eeee^{(\aaaa)}(x)\rrrr|<
\dddd{\eeee}{\Gamma (1-\aaaa)}\int_0^x (x-\xi)^{-\aaaa}d\xi=\dddd{\eeee x^{1-\aaaa}}{\Gamma (2-\aaaa)}.$$
Therefore
$$\lim_{\eeee \rrrrrrrr 0}{q_\eeee^{(\aaaa)}(x)}=y^{(\aaaa)}(x).
$$
$\bullet\;$ Now we estimate $\mathcal{B}_h \llll( y(x)-q_\eeee (x)\rrrr)$.
\begin{align*}
\llll|\sum_{k=0}^n \delta_k^{(\aaaa)} \llll( y_{n-k}-q_{\eeee,n-k}\rrrr)\rrrr|\leq \sum_{k=0}^n |\delta_k^{(\aaaa)}| \llll| y_{n-k}-q_{\eeee,n-k}\rrrr|\leq x\eeee \sum_{k=0}^n |\delta_k^{(\aaaa)}|.
\end{align*}
We have that
$$\sum_{k=0}^n |\delta_k^{(\aaaa)}|\leq \sum_{k=0}^n |\ssss_k^{(\aaaa)}|+3|\zzzz(\aaaa-1)|=2-3\zzzz(\aaaa-1).$$
Hence
\begin{align*}
\llll|\mathcal{B}_h \llll( y(x)-q_\eeee (x)\rrrr)\rrrr|\leq (2-3\zzzz(\aaaa-1))x\eeee ,
\end{align*}
and
$$\lim_{\eeee \rrrrrrrr 0} \mathcal{B}_h q_\eeee (x) =\mathcal{B}_h  y(x).$$
$\bullet\;$From Corollary 7
$$q_\eeee^{(\aaaa)}(x)=\dddd{1}{\GGGG(2-\aaaa)h^\aaaa}\mathcal{B}_h q_\eeee(x)+O\llll(h^{2}\rrrr).$$
By letting $\eeee \rrrrrrrr 0$, we obtain
$$y^{(\aaaa)}(x)=\dddd{1}{\GGGG(2-\aaaa)h^\aaaa}\mathcal{B}_h y(x)+O\llll(h^{2}\rrrr).$$
\end{proof}
\begin{figure}[ht]
\centering
\begin{minipage}{.47\textwidth}
  \centering
  \includegraphics[width=.98\linewidth]{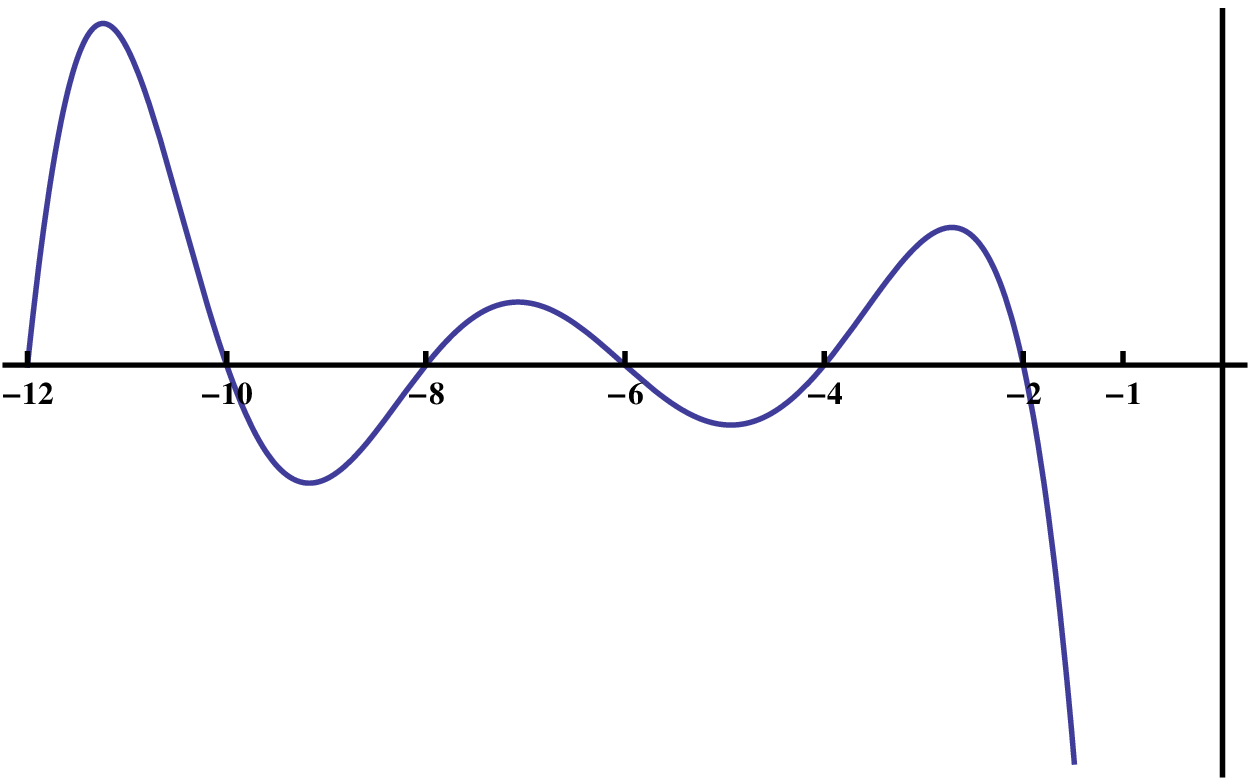}
  \label{fig:test3}
\end{minipage}%
\hspace{0.3cm}
\begin{minipage}{.47\textwidth}
  \centering
  \includegraphics[width=.98\linewidth]{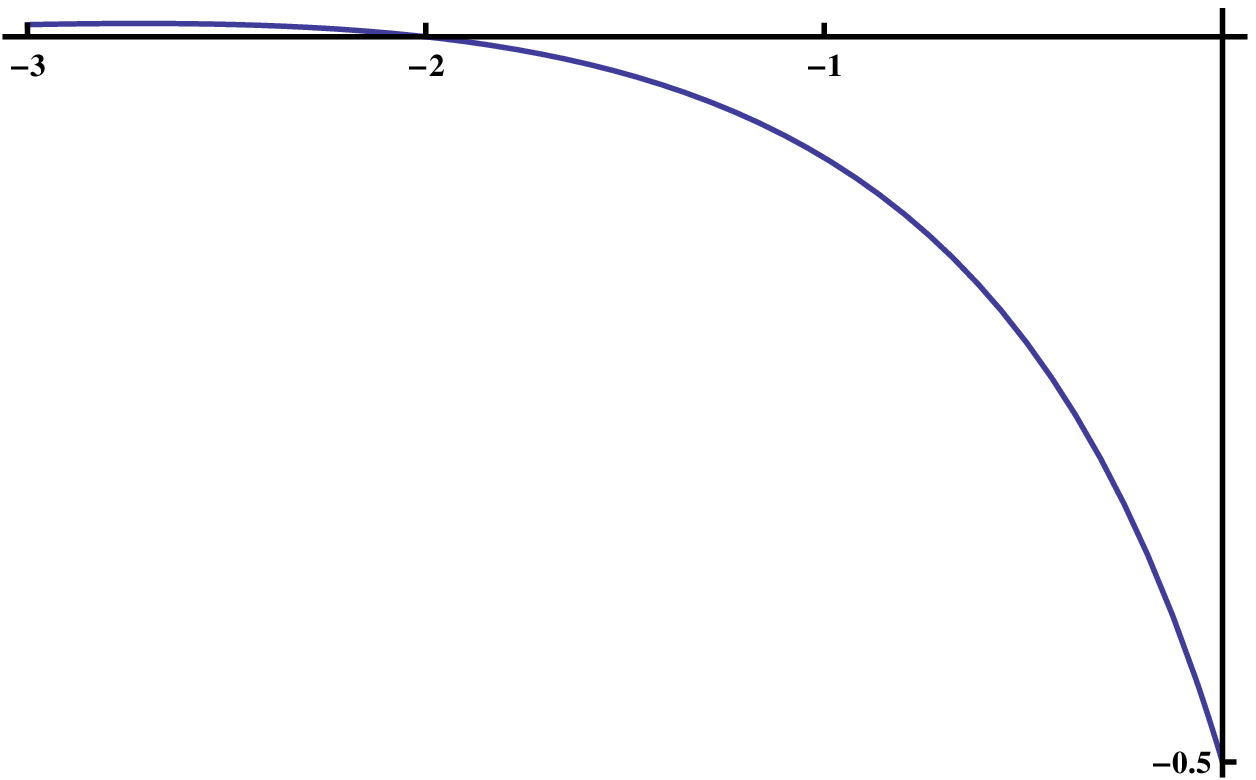}
  \label{fig:test4}
\end{minipage}
 \captionof{figure}{Graphs of the Riemann zeta function on the intervals $[-12,-1]$ and $[-3,0]$.}
\end{figure}
In Table 3 we compute the error and the numerical order of approximation \eqref{A2} for the Caputo derivative of the functions $y(x)=\cos x$ and $y(x)=\ln (x+1)$ on the interval $[0,1]$, when $\aaaa=0.25$. In Claim 9 and Lemma 10 we discuss the properties of the coefficients $\ssss_2^{(\aaaa)}$ and $\dddddd_2^{(\aaaa)}$.
\begin{clm} Let $0<\aaaa<1$
$$-0.1<\ssss_2^{(\aaaa)}<0.$$
\end{clm}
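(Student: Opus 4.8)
We have $\ssss_2^{(\aaaa)}=3^{1-\aaaa}-2\cdot 2^{1-\aaaa}+1$. The plan is to set $\bbbb=1-\aaaa\in(0,1)$ and study the function
$$g(\bbbb)=3^{\bbbb}-2\cdot 2^{\bbbb}+1$$
on the interval $\bbbb\in(0,1)$, showing $-0.1<g(\bbbb)<0$ there. First I would check the endpoints: $g(0)=1-2+1=0$ and $g(1)=3-4+1=0$, so the inequalities are not attained but hold on the open interval provided $g$ stays negative and bounded below by $-0.1$ in between.

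For the upper bound $g(\bbbb)<0$, the cleanest route is convexity: the map $t\mapsto t^{\bbbb}$ is strictly concave for fixed $\bbbb\in(0,1)$, so by strict concavity applied to the three points $1,2,3$ (with $2$ the midpoint of $1$ and $3$) we get $2^{\bbbb}>\tfrac12\big(1^{\bbbb}+3^{\bbbb}\big)$, i.e. $3^{\bbbb}-2\cdot 2^{\bbbb}+1<0$ for all $\bbbb\in(0,1)$. This disposes of the right-hand inequality immediately and without calculus on $g$.

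For the lower bound $g(\bbbb)>-0.1$, I would locate the minimum of $g$ on $[0,1]$. Since $g'(\bbbb)=3^{\bbbb}\ln 3-2\cdot 2^{\bbbb}\ln 2$ and $g''(\bbbb)=3^{\bbbb}(\ln 3)^2-2\cdot 2^{\bbbb}(\ln 2)^2$, one checks $g''>0$ on $[0,1]$ (at $\bbbb=0$, $g''(0)=(\ln 3)^2-2(\ln 2)^2\approx 1.207-0.961>0$, and $g''$ is increasing since $3^{\bbbb}(\ln 3)^2$ grows faster than $2\cdot 2^{\bbbb}(\ln 2)^2$), so $g$ is strictly convex and has a unique interior critical point $\bbbb^{*}$ where $3^{\bbbb^{*}}\ln 3=2\cdot 2^{\bbbb^{*}}\ln 2$, which is the global minimum. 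Solving $(3/2)^{\bbbb^{*}}=2\ln 2/\ln 3$ gives $\bbbb^{*}=\ln(2\ln 2/\ln 3)/\ln(3/2)\approx 0.548$, and then $g(\bbbb^{*})\approx 3^{0.548}-2\cdot 2^{0.548}+1\approx 1.829-2\cdot 1.462+1\approx -0.095>-0.1$. So $\ssss_2^{(\aaaa)}=g(1-\aaaa)\ge g(\bbbb^{*})>-0.1$ for all $\aaaa\in(0,1)$.

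The main obstacle is making the numerical lower bound rigorous: I need a clean argument that $g(\bbbb^{*})>-0.1$ without appealing to a calculator. One option is to avoid computing $\bbbb^{*}$ exactly and instead exhibit an explicit value, e.g. evaluate $g$ at $\bbbb=1/2$: $g(1/2)=\sqrt{3}-2\sqrt{2}+1=1+\sqrt3-2\sqrt2$. Using $\sqrt3<1.7321$ and $\sqrt2>1.4142$ gives $g(1/2)<1+1.7321-2.8284=-0.0963$, and using $\sqrt3>1.7320$, $\sqrt2<1.41422$ gives $g(1/2)>1+1.7320-2.82844=-0.09644>-0.1$. Since $g$ is convex with minimum near $0.548$, the value at $1/2$ is already very close to the minimum; to close the gap rigorously I would then bound $g(\bbbb^{*})\ge g(1/2)+g'(1/2)(\bbbb^{*}-1/2)\ge g(1/2)-|g'(1/2)|\cdot\tfrac12$ using $g'(1/2)=\sqrt3\ln3-2\sqrt2\ln2$, which is small in absolute value, or simply check convexity gives $g(\bbbb)\ge \min\{g(1/2),\text{value at a second nearby point}\}$ on a short interval covering $\bbbb^{*}$ and use endpoint negativity elsewhere — in any case the arithmetic shows the minimum exceeds $-0.1$ with room to spare.
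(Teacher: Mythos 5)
Your core argument is the same as the paper's: the paper works with $\ssss(\aaaa)=-\ssss_2^{(1-\aaaa)}=2^{\aaaa+1}-3^{\aaaa}-1$ (your $-g$), notes it vanishes at $0$ and $1$, locates the unique critical point and evaluates there numerically, getting a maximum $\approx 0.0985$. Your concavity argument for the sign ($2^{\bbbb}>\tfrac12(1^{\bbbb}+3^{\bbbb})$ by strict concavity of $t\mapsto t^{\bbbb}$) is a genuine improvement: the paper merely asserts positivity of $\ssss$. Two corrections on the lower bound, though. First, your critical point is miscomputed: $\bbbb^{*}=\ln(2\ln 2/\ln 3)/\ln(3/2)\approx 0.5736$, not $0.548$ (the paper has the right value); the conclusion survives, since $g(\bbbb^{*})\approx -0.0985$. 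Second, and more importantly, your claim that the arithmetic closes \emph{``with room to spare''} is false: the true minimum $-0.0985$ sits only about $0.0015$ above $-0.1$. In particular your proposed rigorous fallback fails numerically: $g(1/2)\approx -0.0964$ and $g'(1/2)=\sqrt3\ln 3-2\sqrt2\ln 2\approx -0.0577$, so the tangent-line bound $g(\bbbb^{*})\geq g(1/2)-|g'(1/2)|\cdot\tfrac12\approx -0.125$ does not beat $-0.1$. To make the lower bound rigorous you would need either a tangent-line bound anchored very close to $\bbbb^{*}$ together with tight control on where $\bbbb^{*}$ lies, or a fine subdivision of $[0,1]$ with convexity on each piece; the paper itself does not do this and simply accepts the numerical evaluation at the critical point, so at that level of rigor your proposal and the paper are on equal footing.
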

\begin{proof} Denote 
$$\ssss(\aaaa)=-\ssss_2^{(1-\aaaa)}=2^{\aaaa+1}-3^\aaaa-1.$$
The function $\ssss(\aaaa)$ has values $\ssss(0)=\ssss(1)=0$, and
$$\ssss^\prime (\aaaa)=\ln 2. 2^{\aaaa+1}-\ln 3. 3^\aaaa.$$
The first derivative of $\ssss(\aaaa)$ is zero when
$$\ln 2. 2^{\aaaa+1}=\ln. 3 3^\aaaa,\quad \llll(\dddd{3}{2}\rrrr)^\aaaa=\dddd{2.\ln 2}{\ln 3},\quad \aaaa=\dddd{\ln\llll( \dddd{2.\ln 2}{\ln 3}\rrrr)}{\ln (3/2)}\approx 0.5736.
$$
The function $\ssss(\aaaa)$ is positive and has a maximum value $\ssss(0.5736)\approx0.0985$ on the interval $[0,1]$.
\end{proof}
The Riemann zeta function has zeroes at the negative even integers and is decreasing on the interval $[-2,1]$. The value of $\zzzz(\aaaa-1)$ is negative, when $\aaaa$ is between $0$ and $1$. Then $\dddddd_0^{(\aaaa)}>0$ and $\dddddd_1^{(\aaaa)}<0$. From the properties of the coefficients of \eqref{A1}, the numbers $\dddddd_n^{(\aaaa)}=\ssss_n^{(\aaaa)}$ are negative, for $n\geq 3$. 
\begin{lem} The number $\dddddd_2^{(\aaaa)}$ is positive when $0<\aaaa<1$.
\end{lem}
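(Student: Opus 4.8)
The plan is to reduce the statement to the single inequality $|\zzzz(\aaaa-1)|>|\ssss_2^{(\aaaa)}|$ on $(0,1)$ and then verify it by a short partition-plus-monotonicity argument. Since $\dddddd_2^{(\aaaa)}=\ssss_2^{(\aaaa)}-\zzzz(\aaaa-1)$, and Claim 9 gives $\ssss_2^{(\aaaa)}<0$ while (Section 2) $\zzzz(\aaaa-1)<0$ for $\aaaa\in(0,1)$, we have $\dddddd_2^{(\aaaa)}>0$ if and only if $|\zzzz(\aaaa-1)|>|\ssss_2^{(\aaaa)}|$. Two monotonicity facts will drive everything: first, because $\zzzz$ is decreasing on $[-2,1]$, the map $\aaaa\mapsto|\zzzz(\aaaa-1)|$ is increasing on $(0,1)$, with infimum $|\zzzz(-1)|=1/12$ approached as $\aaaa\to0^+$; second, from the analysis in Claim 9 the quantity $|\ssss_2^{(\aaaa)}|$ is unimodal in $\aaaa$ --- it increases on $[0,\aaaa^\ast]$, decreases on $[\aaaa^\ast,1]$, stays $<1/10$ throughout, and attains its maximum $\approx0.0985$ at $\aaaa^\ast=1-\bbbb^\ast\approx0.426$ --- which one reads off from the substitution $\bbbb=1-\aaaa$ turning $|\ssss_2^{(\aaaa)}|$ into the function $2^{\bbbb+1}-3^{\bbbb}-1$ studied there.

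A one-shot argument fails because $1/12<1/10$: the bounds $|\zzzz(\aaaa-1)|\ge1/12$ and $|\ssss_2^{(\aaaa)}|<1/10$ are attained at different $\aaaa$. So I would partition $(0,1)$ at $0.1,\,0.2,\,0.3$. On $[0.3,1)$ one has $|\zzzz(\aaaa-1)|\ge|\zzzz(-0.7)|$ and $|\ssss_2^{(\aaaa)}|<1/10$, so it is enough that $|\zzzz(-0.7)|>1/10$. On each earlier piece $[\aaaa_j,\aaaa_{j+1}]$, with $(\aaaa_0,\aaaa_1,\aaaa_2,\aaaa_3)=(0,0.1,0.2,0.3)$, one has $|\zzzz(\aaaa-1)|\ge|\zzzz(\aaaa_j-1)|$ and $|\ssss_2^{(\aaaa)}|\le|\ssss_2^{(\aaaa_{j+1})}|$ (the latter because $|\ssss_2^{(\aaaa)}|$ is increasing on $[0,\aaaa^\ast]\supseteq[0,0.3]$), so the whole claim comes down to the four inequalities $|\zzzz(-1)|>|\ssss_2^{(0.1)}|$, $|\zzzz(-0.9)|>|\ssss_2^{(0.2)}|$, $|\zzzz(-0.8)|>|\ssss_2^{(0.3)}|$, and $|\zzzz(-0.7)|>1/10$; the right-hand sides are the explicit numbers $1/12$, $|\ssss_2^{(0.1)}|\approx0.044$, $|\ssss_2^{(0.2)}|\approx0.074$, $|\ssss_2^{(0.3)}|\approx0.091$, and $1/10$, each an elementary evaluation.

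For the three remaining left-hand sides I would invoke the functional-equation identity recorded in Section 2, $|\zzzz(s-1)|=\GGGG(2-s)\,2^{s-1}\pi^{s-2}\cos(\pi s/2)\,\zzzz(2-s)$ (all factors positive for $0<s<1$), at $s=0.1,\,0.2,\,0.3$, and bound it from below with the crude but amply sufficient estimates $\zzzz(2-s)>\zzzz(2)=\pi^2/6$ and $\GGGG(2-s)\ge\min_{t\in[1,2]}\GGGG(t)>0.88$, together with explicit rational lower bounds for $2^{s-1}$, $\pi^{s-2}$ and $\cos(\pi s/2)$; this produces lower bounds for $|\zzzz(-0.9)|$, $|\zzzz(-0.8)|$ and $|\zzzz(-0.7)|$ comfortably above $0.074$, $0.091$ and $1/10$ respectively. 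The one place where care is needed --- and where the scheme would collapse if handled sloppily --- is the choice of partition: it must be fine enough that on each piece the left-endpoint value of $|\zzzz(\aaaa-1)|$ still exceeds the right-endpoint value of $|\ssss_2^{(\aaaa)}|$. This works only because $\min_{(0,1)}\dddddd_2^{(\aaaa)}$ is already a comfortable $\approx0.048$; a coarser partition, such as a single split at $\aaaa^\ast$, does not close.
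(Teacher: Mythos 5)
Your proof is correct and follows essentially the same route as the paper's: reduce the claim to comparing $|\zeta(\alpha-1)|$ (increasing on $(0,1)$, infimum $1/12$) with $|\sigma_2^{(\alpha)}|$ (unimodal with peak $\approx 0.0985$ at $\alpha\approx 0.426$), partition the interval, and check finitely many numerical inequalities; the only differences are that the paper needs just two subintervals (split at $\alpha=0.2$: $|\zeta(-0.8)|\approx 0.122$ against the global maximum $0.0985$ on $[0.2,1)$, and $1/12$ against $|\sigma_2^{(0.2)}|\approx 0.074$ on $(0,0.2]$) where you use four, and that you certify the zeta values via the functional equation rather than quoting them. Your crude bounds $\zeta(2-s)>\pi^2/6$ and $\Gamma(2-s)>0.88$ do close every case (the tightest being $|\zeta(-0.8)|>0.100>0.091$), so the argument is sound as written.
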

\begin{proof} From the definition of $\dddddd_2^{(1-\aaaa)}$
$$\dddddd_2^{(1-\aaaa)}=\ssss_2^{(1-\aaaa)}-\zzzz(-\aaaa)=-\ssss(\aaaa)-\zzzz(-\aaaa)=z(\aaaa)-\ssss(\aaaa).$$
where $z(\aaaa)=-\zzzz (-\aaaa)$. The function $z(\aaaa)$ is decreasing on the interval $[0,1]$ with values at the endpoints  $z(0)=0.5$ and $z(1)=1/12=0.08333$. The function $\ssss(\aaaa)$ is increasing on the interval $[0,0.5736]$ and decreasing on the interval $[0.5736,1]$.

Now we show that the minimum values of  $\dddddd_2^{(1-\aaaa)}$  on the intervals $[0,0.8]$ and $[0.8,1]$ are positive.
$$\min_{\aaaa \in [0,0.8]} \dddddd_2^{(1-\aaaa)}>\min_{\aaaa \in [0,0.8]} z(\aaaa)-\max_{\aaaa \in [0,0.8]} \ssss(\aaaa),$$
$$\min_{\aaaa \in [0,0.8]} \dddddd_2^{(1-\aaaa)}>z(0.8)-\ssss(0.5736)\approx 0.122-0.0985=0.0235.$$
and
$$\min_{\aaaa \in [0.8,1]} \dddddd_2^{(1-\aaaa)}>\min_{\aaaa \in [0.8,1]} z(\aaaa)-\max_{\aaaa \in [0.8,1]} \ssss(\aaaa),$$
$$\min_{\aaaa \in [0.8,1]} \dddddd_2^{(1-\aaaa)}>z(1)-\ssss(0.8)\approx 0.083-0.074=0.009.$$
Therefore the numbers $\dddddd_2^{(1-\aaaa)}$ are positive when $0<\aaaa< 1$.
\end{proof}
\section{Numerical Experiments}
In section 4 we showed that the approximation for the Caputo derivative 
$$y^{(\aaaa)}_n\approx\dddd{1}{\GGGG(2-\aaaa)h^\aaaa}\sum_{k=0}^n \delta_k^{(\aaaa)} y_{n},$$
has accuracy $O\llll( h^2\rrrr)$ when $n\geq2$.  The numbers $\dddddd_k^{(\alpha)}$ satisfy
\begin{equation*}
\dddddd_0^{(\alpha)}>0,\;\dddddd_1^{(\alpha)}<0,\;\dddddd_2^{(\alpha)}>0,\; \dddddd_3^{(\alpha)}<\dddddd_4^{(\alpha)}<\cdots<\dddddd_k^{(\alpha)}<\cdots<0,\; \sum_{k= 0}^\infty \dddddd_k^{(\alpha)} = 0.
\end{equation*}
In this section we compare the performance of the numerical solutions of the fractional relaxation and time-fractional subdiffusion equations using approximations \eqref{A1} and \eqref{A2} for Caputo derivative. From the Mean-Value theorem  for the Caputo derivative  
$$y(h)-y(0)= \dddd{h^\aaaa}{\GGGG(1+\aaaa)}y^{(\aaaa)}(\theta), \qquad (0<\theta<h).$$
The numbers  $\GGGG(1+\aaaa)$ and $\GGGG(2-\aaaa)$ are between $0$ and $1$, when $0<\aaaa<1$.
\begin{lem} Let $y$ be a sufficiently differentiable function on $[0,h]$
\begin{equation}\label{A4}
y(h)-y(0)-h^\aaaa \GGGG(2-\aaaa)y^{(\aaaa)}(h)=O\llll(h^2\rrrr).
\end{equation}
\end{lem}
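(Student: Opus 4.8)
The plan is to expand both $y(h)-y(0)$ and $h^\aaaa\GGGG(2-\aaaa)y^{(\aaaa)}(h)$ to first order in $h$ and observe that the leading terms cancel. Since $y\in C^2[0,h]$, Taylor's theorem gives $y(h)-y(0)=y'(0)h+O\llll(h^2\rrrr)$, so it remains to show that $h^\aaaa\GGGG(2-\aaaa)y^{(\aaaa)}(h)=y'(0)h+O\llll(h^2\rrrr)$ as well.

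For this I would apply Claim 1 with $x=h$, which gives
$$\GGGG(2-\aaaa)y^{(\aaaa)}(h)=\GGGG(2-\aaaa)J^{2-\aaaa}y''(h)+y'(0)h^{1-\aaaa}=\int_0^h (h-\xi)^{1-\aaaa}y''(\xi)\,d\xi+y'(0)h^{1-\aaaa}.$$
Setting $M=\max_{[0,h]}\llll|y''\rrrr|$, the integral is bounded in absolute value by $M\int_0^h(h-\xi)^{1-\aaaa}\,d\xi=Mh^{2-\aaaa}/(2-\aaaa)$, so $\GGGG(2-\aaaa)y^{(\aaaa)}(h)=y'(0)h^{1-\aaaa}+O\llll(h^{2-\aaaa}\rrrr)$. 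Multiplying by $h^\aaaa$ yields $h^\aaaa\GGGG(2-\aaaa)y^{(\aaaa)}(h)=y'(0)h+O\llll(h^2\rrrr)$, and subtracting this from $y(h)-y(0)=y'(0)h+O\llll(h^2\rrrr)$ the terms $y'(0)h$ cancel, leaving $O\llll(h^2\rrrr)$, which is \eqref{A4}.

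I expect no genuine obstacle; the only point requiring a little care is the estimate of the integral $\int_0^h (h-\xi)^{1-\aaaa}y''(\xi)\,d\xi$, but since $1-\aaaa>0$ the kernel $(h-\xi)^{1-\aaaa}$ is bounded on $[0,h]$ and the bound is elementary. If one prefers not to invoke Claim 1, the same computation can be done directly from $\GGGG(1-\aaaa)y^{(\aaaa)}(h)=\int_0^h y'(\xi)(h-\xi)^{-\aaaa}\,d\xi$: writing $y'(\xi)=y'(0)+\int_0^\xi y''(t)\,dt$, the constant part integrates to $y'(0)h^{1-\aaaa}/(1-\aaaa)$, while the remainder is $O\llll(h^{2-\aaaa}\rrrr)$ because $\int_0^h \xi(h-\xi)^{-\aaaa}\,d\xi=O\llll(h^{2-\aaaa}\rrrr)$; this reproduces the same conclusion after dividing by $\GGGG(1-\aaaa)$, multiplying by $h^\aaaa\GGGG(2-\aaaa)$, and recalling $\GGGG(2-\aaaa)=(1-\aaaa)\GGGG(1-\aaaa)$.
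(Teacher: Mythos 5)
Your proof is correct. Your primary route differs slightly from the paper's: the paper expands $y'(\xi)=y'(0)+\xi y''(0)+O(h^2)$ inside the integral $\int_0^h y'(\xi)(h-\xi)^{-\aaaa}\,d\xi$, explicitly computes both moment integrals, and then recombines $h\bigl(y'(0)+\tfrac{h}{2}y''(0)\bigr)$ into $hy'(h/2)=y(h)-y(0)+O(h^2)$; you instead invoke Claim 1 to isolate the $y'(0)h^{1-\aaaa}$ term exactly and dispose of the entire $J^{2-\aaaa}y''$ contribution with a single sup-norm bound $Mh^{2-\aaaa}/(2-\aaaa)$, which becomes $O(h^2)$ after multiplying by $h^\aaaa$. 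Your version is marginally leaner since it never needs the value of the first moment $\int_0^h\xi(h-\xi)^{-\aaaa}\,d\xi$ or the midpoint identification, only the exact cancellation of the $y'(0)h$ terms; the paper's computation has the minor side benefit of exhibiting the next-order coefficient. The alternative you sketch at the end (writing $y'(\xi)=y'(0)+\int_0^\xi y''$) is essentially the paper's own argument.
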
 
\begin{proof} From the definition of the Caputo derivative
$$y^{(\aaaa)}(h)=\dddd{1}{\Gamma (1-\aaaa)}\int_0^h \dfrac{y^\prime(\xi)}{(h-\xi)^{\aaaa}}d\xi.$$
Expand the function $y^\prime(\xi)$ around $\xi=0$
$$y^\prime(\xi)=y^\prime\llll(0 \rrrr)+\xi y^{\prime \prime}\llll(0\rrrr) +O(h^2),$$
\begin{align*}
& \Gamma (1-\aaaa)y^{(\aaaa)}(h)=\int_0^h \dfrac{y^\prime\llll(0 \rrrr)+\xi y^{\prime\prime}(0)+O\llll(h^2\rrrr)}{(h-\xi)^{\aaaa}}d\xi= \\
&y^\prime\llll(0 \rrrr)\int_0^h \dfrac{1}{(h-\xi)^{\aaaa}}+
y^{\prime\prime}(0)\int_0^{h} \dfrac{\xi}{(h-\xi)^{\aaaa}}d\xi+
O\llll(h^2\rrrr)\int_{0}^h\dfrac{1}{(h-\xi)^{\aaaa}}d\xi.
\end{align*}
We have that
$$\int_0^{h} \dfrac{1}{(h-\xi)^{\aaaa}}d\xi=\dfrac{h^{1-\aaaa}}{1-\aaaa},\quad
\int_0^{h} \dfrac{\xi}{(h-\xi)^{\aaaa}}d\xi=\dddd{h^{2-\aaaa}}{(1-\aaaa)(2-\aaaa)}.$$
Then
$$\Gamma (1-\aaaa)y^{(\aaaa)}(h)=\dfrac{h^{1-\aaaa}}{1-\aaaa}y^\prime (0 )+\dfrac{h^{2-\aaaa}}{(1-\aaaa)(2-\aaaa)}y^{\prime\prime} (0 )+O\llll(h^{3-\aaaa}\rrrr),$$
$$\Gamma (2-\aaaa)h^\aaaa y^{(\aaaa)}(h)=h\llll(y^\prime (0 )+\dfrac{h}{2-\aaaa}y^{\prime\prime} (0 )\rrrr)+O\llll(h^{3-\aaaa}\rrrr),$$
$$\Gamma (2-\aaaa)h^\aaaa y^{(\aaaa)}(h)=h\llll(y^\prime (0 )+\dfrac{h}{2}y^{\prime\prime} (0 )\rrrr)+O\llll(h^{2}\rrrr),$$
$$\Gamma (2-\aaaa)h^\aaaa y^{(\aaaa)}(h) =hy^\prime\llll(\dddd{h}{2}\rrrr)+O\llll(h^{2}\rrrr)= y (h)-y(0)+O\llll(h^{2}\rrrr).$$
\end{proof}
\subsection{Numerical Solution of the Fractional Relaxation Equation}
The fractional relaxation equation \eqref{freqn} is an ordinary fractional differential equation with constant coefficients. The exact solution of the fractional relaxation equation is determined with the Laplace transform method \cite{Podlubny1999}. Numerical solutions of the relaxation equation are discussed in [19-21].
In this section we compare the numerical solutions of the equation 
\begin{equation}\label{REQ}
y^{(\aaaa)}+y=F(t),
\end{equation}
for approximations \eqref{A1} and \eqref{A2} of the Caputo derivative. 
When the solution $y(t)$ of \eqref{REQ} is a continuously differentiable function, the initial condition  $y(0)$ is determined from the function $F(t)$ by $y(0)=F(0)$. Let
	$$F(t)= 1-4t+5t^2-\dddd{4}{\GGGG(2-\aaaa)}t^{1-\aaaa} +\dddd{10}{\GGGG(3-\aaaa)}t^{2-\aaaa}.$$
  Equation \eqref{REQ} has the solution
	$$y(t)= 1-4t+5t^2,$$ 
	and the initial value $y(0)=1$. Now we determine a second order numerical solution of \eqref{REQ} on the interval $[0,1]$, using approximations \eqref{A2}  and \eqref{A4} for the Caputo derivative. 
	
	Let $h=1/N$, where $N$ is a positive integer, and $y_n=y(x_n)=y(n h)$. In Lemma 11, we showed that \eqref{A4} 
	\begin{equation}\label{A6}
	\dddd{y(h)-y(0)}{\Gamma(2-\aaaa)h^\aaaa}=y^{(\aaaa)}(h)+O\llll(h^{2-\aaaa}\rrrr).
	\end{equation}
	Approximate the Caputo derivative $y^{(\aaaa)}(h)$ in equation \eqref{REQ}
	$$\dddd{y(h)-y(0)}{\Gamma(2-\aaaa)h^\aaaa}+y(h)=F(h)+O\llll(h^{2-\aaaa}\rrrr),$$
	$$y_1\llll(1+\Gamma(2-\aaaa)h^\aaaa\rrrr)=y_0+\Gamma(2-\aaaa)h^\aaaa F_1+O\llll(h^2\rrrr).$$
Let $\llll\{\tilde{y}_k\rrrr\}_{k=0}^N$ be an approximation for the exact solution $y_k$ at the points $x_k=k h$. Set $\tilde{y}_0=y(0)=1$. The value of $\tilde{y}_1$ is computed from the above approximation with accuracy $O\llll(h^{2}\rrrr)$
$$\tilde{y}_1=\dddd{\tilde{y}_0+\Gamma(2-\aaaa)h^\aaaa F_1}{1+\Gamma(2-\aaaa)h^\aaaa}.$$		
	The numbers $\tilde{y}_n$, for $n\geq 2$, are  computed from  equation \eqref{REQ} by approximating the Caputo derivative $y_n^{(\aaaa)}$ with \eqref{A2}.
		$$\dddd{1}{\GGGG(2-\aaaa)h^\aaaa}\sum_{k=0}^n \delta_k^{(\aaaa)} y_{n-k}+y_n=F_n+O\llll(h^2 \rrrr),$$
				$$y_n\llll(\dddddd_0^{(\aaaa)}+\Gamma(2-\aaaa)h^\aaaa\rrrr)=\Gamma(2-\aaaa)h^\aaaa F_n-\sum_{k=1}^{n} \delta_k^{(\aaaa)} y_{n-k}+O\llll(h^{2+\aaaa}\rrrr).$$
				The numerical solution $\llll\{\tilde{y}_k\rrrr\}_{k=0}^N$, for $2\leq n\leq N$, is computed explicitly with
			\begin{equation}\label{NS1}
				\tilde{y}_n=\dddd{1}{\dddddd_0^{(\aaaa)}+\GGGG(2-\aaaa)h^\aaaa}\llll(\GGGG(2-\aaaa)h^\aaaa F_n-\sum_{k=1}^{n} \delta_k^{(\aaaa)} \tilde{y}_{n-k} \rrrr).
\end{equation}
				Similarly, we obtain an explicit formula for the numerical solution $\llll\{\tilde{\tilde{y}}_k\rrrr\}_{k=0}^N$ of equation \eqref{REQ}, by approximating the Caputo derivative $y_n^{(\aaaa)}$ with \eqref{A1}
		\begin{equation}\label{NS2}
					\tilde{\tilde{y}}_n=\dddd{1}{1+\GGGG(2-\aaaa)h^\aaaa}\llll(\GGGG(2-\aaaa)h^\aaaa F_n-\sum_{k=1}^{n} \ssss_k^{(\aaaa)} \tilde{\tilde{y}}_{n-k} \rrrr).
					\end{equation}
		Numerical solution \eqref{NS1}  converges faster to the  solution of the fractional relaxation equation, because it has a second order accuracy $O\llll(h^{2} \rrrr)$, and the accuracy  of  numerical solution \eqref{NS2} is $O\llll(h^{2-\aaaa} \rrrr)$. 
		\begin{figure}[ht]
  \centering
  \caption{Graph of the exact solution of equation \eqref{REQ} and  numerical solutions \eqref{NS1}-black,  and \eqref{NS2}-red, for $h=0.1$ and $\alpha=0.8$.} 
  \includegraphics[width=0.55\textwidth]{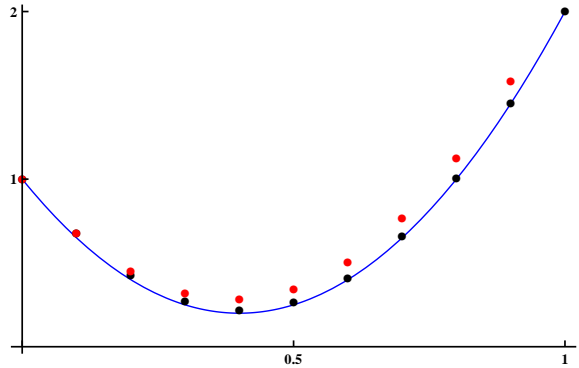}
\end{figure}
\begin{table}[ht]
    \caption{Maximum error and order of numerical solutions \eqref{NS2} and \eqref{NS1} for  equation \eqref{REQ} on the interval $[0,1]$, when $\alpha=0.8$. }
    \begin{subtable}{0.5\linewidth}
      \centering
  \begin{tabular}{l c c }
  \hline \hline
    $h$ & $Error$ & $Order$  \\ 
		\hline \hline
$0.05$         &$0.0628014$            &$1.17381$\\
$0.025$        &$0.0275997$            &$1.19262$\\
$0.0125$       &$0.0120751$     &$1.19262$\\
$0.00625$      &$0.0052704$     &$1.19603$\\
$0.003125$     &$0.0022975$     &$1.19785$\\
		\hline
  \end{tabular}
    \end{subtable}%
    \begin{subtable}{.5\linewidth}
      \centering
				\quad
  \begin{tabular}{ l  c  c }
    \hline \hline
    $h$ & $Error$ &$Order$  \\ \hline \hline
$0.05$     & $0.0081544$          & $1.85708$ \\
$0.025$    & $0.0021629$          & $1.91461$ \\
$0.0125$   & $0.0005599$          & $1.94979$ \\
$0.00625$  & $0.0001428$   & $ 1.97076$ \\
$0.003125$ & $0.0000361$        & $1.98307$ \\       
\hline
  \end{tabular}
    \end{subtable} 
\end{table}
\subsection{Numerical Solution of the Fractional Subdiffusion Equation}
The time-fractional fractional subdiffusion equation is obtained from the heat transfer equation by replacing the time derivative with a  fractional derivative of order $\aaaa$, where $0<\aaaa<1$
	\begin{equation}\label{TFSE}
\dfrac{\partial^\alpha u(x,t)}{\partial t^\alpha}=\dfrac{\partial^2 u(x,t)}{\partial x^2}+F(x,t),
\end{equation}
with initial and boundary conditions
$$u(x,0)=u_0(x),\;u(0,t)=u_L(t),\;u(1,t)=u_R(t).$$
Numerical solutions of the fractional subdiffusion equation are discussed in \cite{Dimitrov2013,GaoSunZhang2012,Lazarov2015_2,ZengLiLiuTurner2013}.
In this section we determine the numerical solutions \eqref{NS3} and \eqref{NS4} for the fractional subdiffusion equation  obtained by approximating the Caputo derivative with \eqref{A1} and \eqref{A2} on the region
	$$(x,t)\in [0,1]\times[0,1].$$
	
Let $h=1/N,\tttt=1/M$, where $M$ and $N$ are positive integers, and $\mathcal{G}$ be a grid on the square $[0,1]\times[0,1]$
$$\mathcal{G}=\llll\{(n h,m \tttt) \| 1\leq n\leq N, 1\leq m\leq M\rrrr\}.$$
Denote by $u_n^m$ and $F_n^m$ the values of the functions $u(x,t)$ and $F(x,t)$ on $\mathcal{G}$
$$u_n^m=u(n h,m \tttt),\quad F_n^m=F(n h,m\tttt).$$
By approximating the values of the Caputo derivative in the time direction at the points $(nh,\tttt)$ using  \eqref{A6} and using a central difference approximation for the second derivative in the space direction we obtain
$$\dddd{u_n^1-u_n^0}{\tttt^\aaaa\GGGG(2-\aaaa)}=
\dddd{u_{n-1}^1-2 u_n^1+u_{n+1}^1}{h^2}+F(nh,\tttt)+O\llll(h^2+\tttt^{2-\aaaa}\rrrr).$$
Let
$$\eta=\GGGG(2-\aaaa)\dddd{\tttt^\aaaa}{h^2}.$$
The solution of the fractional subdiffusion equation satisfies
$$-\eta u_{n-1}^1+(1+2\eta) u_n^1-\eta u_{n+1}^1=u_n^0+\GGGG(2-\aaaa)\tttt^\aaaa F(nh,\tttt)+O\llll( \tttt^\aaaa h^2+\tttt^2\rrrr).$$
Let $U_n^m$ be the numerical solution of the fractional subdiffusion equation on the grid $\mathcal{G}$. The numbers $U_n^m$ are approximations for the values of the solution $u_n^m=u(n h, m \tttt)$. The numbers $U_n^0$ are computed from the initial condition $U_n^0=u_0(n h)$. The numbers $U_n^1$ are approximations for the solution of \eqref{TFSE} at time $t=\tttt$. We compute the numbers $U_n^1$ implicitly from  the equations 
$$-\eta U_{n-1}^1+(1+2\eta) U_n^1-\eta U_{n+1}^1=U_n^0+\GGGG(2-\aaaa)\tttt^\aaaa F_n^1,$$
where the values of $U_0^1$ and $U_N^1$ are determined from the boundary conditions 
$$U_0^1=u_L(\tttt),\quad U_n^1=u_R(\tttt).$$
The numbers $U_n^1$ are computed with the  linear system $(k=2,\cdots,N-2)$
	\begin{equation*}\label{System}
	\left\{
	\begin{array}{l l}
(1+2\eta) U_1^1-\eta U_{2}^1=u_0(h)+\eta u_L(\tttt)+\GGGG(2-\aaaa)\tttt^\aaaa F_1^1\\
	-\eta U_{k-1}^1+(1+2\eta) U_k^1-\eta U_{k+1}^1=u_0(k h)+\GGGG(2-\aaaa)\tttt^\aaaa F_{k}^1  \\
	-\eta U_{N-2}^1+(1+2\eta) U_{N-1}^1=u_0((N-1)h)+\eta u_R(\tttt)+\GGGG(2-\aaaa)\tttt^\aaaa F_{N-1}^1.
	\end{array} 
		\right . 
	\end{equation*}
	Let $K$ be a tridiagonal matrix of dimension $N-1$ with values $ 1+2\eta$ on the main diagonal, and $-\eta$ on the diagonals above and below the main diagonal.
		$$K_{5} =
 \begin{pmatrix}
  1+2\eta &  -\eta    & 0        & 0          & 0       \\
 -\eta    &1+2\eta    & -\eta    & 0          & 0       \\
  0       & -\eta     & 1+2\eta  & -\eta      & 0       \\
  0       & 0         & -\eta    &  1+2\eta   & -\eta  \\
	0       & 0         & 0        & -\eta      &  1+2\eta   
 \end{pmatrix}
$$
	and $U^m=\llll(U_1^m,U_2^m,\cdots,U_{N-1}^m\rrrr)$. The vector $U^1$ is solution of the linear system
	\begin{equation}\label{G1}
	K U^1=R_1+\eta R_2,
	\end{equation}
	where $R_1$ and $R_2$ are the column vectors
	$$R_1=\llll[u_0(kh)+\GGGG(2-\aaaa)\tttt^\aaaa F_{n}^1 \rrrr]_{n=1}^{N-1},$$
	$$R_2=\llll[u_L(\tttt),0,\cdots,0,u_R(\tttt)\rrrr]^T.$$
	We determined a second order approximation $U^1$ for the solution of the fractional subdiffusion equation, on the first layer of $\mathcal{G}$, as a solution of the linear system \eqref{G1}.
	When $m\geq 2$ we discretize the Caputo derivative with  equation \eqref{TFSE} with the second order approximation \eqref{A2}
$$\dddd{1}{\tttt^\aaaa\GGGG(2-\aaaa)}\sum_{k=0}^m \delta_k^{(\aaaa)} u_n^{m-k}=
\dddd{u_{n-1}^m-2 u_n^m+u_{n+1}^m}{h^2}+F(nh,m\tttt)+O\llll(h^2+\tttt^2\rrrr).$$
The values of the numerical solution $U_n^m$ are determined from the equations
$$-\eta U_{n-1}^m+(\delta_0^{(\aaaa)}+2\eta) U_n^m-\eta U_{n+1}^m=-\sum_{k=1}^m \delta_k^{(\aaaa)} U_n^{m-k}+\GGGG(2-\aaaa)\tttt^\aaaa F_n^m,$$
and the boundary conditions
$$U_0^m=u_L(m\tttt),\quad U_N^m=u_R(m\tttt).$$
The vector $U^m$ is a solution of the linear system
\begin{equation}\label{NS3}
	(K-\zzzz(a-1)I)U^m=R_1+\eta R_2,
\end{equation}	
where $R_1$ and $R_2$ are the column vectors
	$$R_1=\llll[-\sum_{k=1}^m \delta_k^{(\aaaa)} U_n^{m-k}+\GGGG(2-\aaaa)\tttt^\aaaa F_{n}^m \rrrr]_{n=1}^{N-1},$$
	$$R_2=\llll[u_L(m\tttt),0,\cdots,0,u_R(m\tttt)\rrrr]^T.$$
	The numerical solution $\llll\{U^2,\cdots,U^M\rrrr\}$, using approximation \eqref{A2} for the Caputo derivative, is computed with linear systems \eqref{NS3}. Similarly we determine the numerical solution $\llll\{V^2,\cdots,V^M\rrrr\}$ for approximation \eqref{A1} with linear system \eqref{NS4} and first layer $V^1=U^1$.
	
	The numerical solution $V^m$ is computed with the linear system
	\begin{equation}\label{NS4}
	K V^m=R_1+\eta R_2
	\end{equation}
where $R_1$ and $R_2$ are the vectors
	$$R_1=\llll[-\sum_{k=1}^m \ssss_k^{(\aaaa)} V_n^{m-k}+\GGGG(2-\aaaa)\tttt^\aaaa F_{n}^m \rrrr]_{n=1}^{N-1},$$
	$$R_2=\llll[u_L(m\tttt),0,\cdots,0,u_R(m\tttt)\rrrr]^T.$$
	Numerical solution \eqref{NS3} has accuracy $O\llll(h^2 \rrrr)$ and the accuracy of \eqref{NS4} is $O\llll(h^{2-\aaaa} \rrrr)$. 
	 When 
	$$F(x,t)=2(1-3x)(5t^2-4t+1)+x^2(1-x)\llll(\dddd{10t^{2-\aaaa}}{\GGGG(3-\aaaa)}-\dddd{4t^{1-\aaaa}}{\GGGG(2-\aaaa)}\rrrr),$$
	the fractional sub-diffusion equation  
		\begin{equation}\label{TFSE2}
\dfrac{\partial^\alpha u(x,t)}{\partial t^\alpha}=\dfrac{\partial^2 u(x,t)}{\partial x^2}+F(x,t),\quad (x,t)\in [0,1]\times [0,1]
\end{equation}
with initial and boundary conditions
$$u(x,0)=x^2(1-x),\; u(0,t)=u(1,t)=0,$$
has solution
	$$u(x,t)=x^2(1-x)(1-4t+5t^2).$$
	The maximal error and numerical order of numerical solutions \eqref{NS4} and \eqref{NS3} for $\tttt=h$ and $\tttt=h/2$ at time $t=1$ for the fractional subdiffusion equation \eqref{TFSE2} are given in Table 5 and Table 6.\\
	
	\begin{table}[ht]
    \caption{Maximum error and order of numerical solutions \eqref{NS4} and \eqref{NS3}  for  equation \eqref{TFSE2} when $\alpha=0.6$ and $\tttt=h$, at time $t=1$. }
    \begin{subtable}{0.5\linewidth}
      \centering
  \begin{tabular}{l c c }
  \hline \hline
    $h\;(\tttt=h)$ & $Error$ & $Order$  \\ 
		\hline \hline
$0.05\quad$         &$0.00051794$            &$1.37686$\\
$0.025\quad$        &$0.00019766$            &$1.38974$\\
$0.0125\quad$       &$0.00007530$     &$1.39222$\\
$0.00625\quad$      &$0.00002864$     &$1.39467 $\\
$0.003125\quad$     &$0.00001087$     &$1.39657$\\
		\hline
  \end{tabular}
    \end{subtable}
    \begin{subtable}{.5\linewidth}
      \centering
				\quad\quad
  \begin{tabular}{ l  c  c }
    \hline \hline
    $\tttt\;(\tttt=h)$ & $Error$ &$Order$  \\ \hline \hline
$0.05\quad$     & $0.00001170$          & $1.93892$ \\
$0.025\quad$    & $2.99\times 10^{-6}$          & $1.96593$ \\
$0.0125\quad$   & $7.62\times 10^{-7}$          & $1.97559$ \\
$0.00625\quad$  & $1.93\times 10^{-7}$          & $1.98175$ \\
$0.003125\quad$ & $4.87\times 10^{-8}$        & $1.98648$ \\%
\hline
  \end{tabular}
    \end{subtable} 
\end{table}
\begin{table}[t]
    \caption{Maximum error and order of numerical solutions \eqref{NS4} and \eqref{NS3}  for  equation \eqref{TFSE2} when $\alpha=0.4$ and $\tttt=0.5h$, at time $t=1$. }
    \begin{subtable}{0.5\linewidth}
      \centering
  \begin{tabular}{l c c }
  \hline \hline
    $h \;(h=2\tttt)$ & $Error$ & $Order$  \\ 
		\hline \hline
$0.05$         &$0.00006172$     &$1.56262$\\
$0.025$        &$0.00002069$     &$1.57697$\\
$0.0125$       &$6.91\times 10^{-6}$     &$1.58139$\\
$0.00625$      &$2.30\times10^{-6}$     &$1.58597$\\
$0.003125$     &$7.65\times10^{-7}$     &$1.58946$\\
		\hline
  \end{tabular}
    \end{subtable}%
    \begin{subtable}{.5\linewidth}
      \centering
				\quad\quad
  \begin{tabular}{ l  c  c }
    \hline \hline
    $h\;(h=2\tttt)$ & $Error$ &$Order$  \\ \hline \hline
$0.025$     & $4.41\times 10^{-6}$          & $1.98507$ \\
$0.0125$    & $1.11\times 10^{-6}$          & $1.99521$ \\
$0.00625$   & $2.78\times 10^{-7}$          & $1.99589$ \\
$0.00625$  & $6.95\times 10^{-8}$          & $ 1.99701$ \\
$0.003125$ & $1.74\times 10^{-8}$        & $1.99811$ \\%
\hline
  \end{tabular}
    \end{subtable} 
\end{table}
In numerical solutions \eqref{NS3} and \eqref{NS4}, we use \eqref{A6} to obtain a second order approximation for the solution of the fractional subdiffusion equation on the first layer of $\mathcal{G}$. Another way to determine a second order approximation for the solution at time $t=\tttt$ is to compute the partial derivative $u_t(x,t)$ at time $t=0$ and approximate the solution $u(x,\tttt)$ with a second order Taylor expansion. The function $u(x,t)$ satisfies
	\begin{equation*}
\dfrac{\partial^\alpha u(x,t)}{\partial t^\alpha}=\dfrac{\partial^2 u(x,t)}{\partial x^2}+F(x,t).
\end{equation*}
Apply fractional differentiation of order $1-a$
	\begin{equation*}
\dfrac{\partial^{1-\aaaa}}{\partial t^{1-\alpha}}\dfrac{\partial^\alpha u(x,t)}{\partial t^\alpha}=\dfrac{\partial^{1-\aaaa}}{\partial t^{1-\alpha}}\dfrac{\partial^2 u(x,t)}{\partial x^2}+\dfrac{\partial^{1-\aaaa}F(x,t)}{\partial t^{1-\alpha}},
\end{equation*}
	\begin{equation*}
u_t(x,t)=\dfrac{\partial^{1-\aaaa}}{\partial t^{1-\alpha}}\dfrac{\partial^2 u(x,t)}{\partial x^2}+\dfrac{\partial^{1-\aaaa}F(x,t)}{\partial t^{1-\alpha}}.
\end{equation*}
Set $t=0$
\begin{equation*}
u_t(x,0)=\llll. \dfrac{\partial^{1-\aaaa}}{\partial t^{1-\alpha}}\dfrac{\partial^2 u(x,t)}{\partial x^2}\rrrr|_{t=0}+\llll.\dfrac{\partial^{1-\aaaa}F(x,t)}{\partial t^{1-\alpha}}\rrrr|_{t=0}.
\end{equation*}
When the solution $u(x,t)$ is a sufficiently smooth function
$$\llll. \dfrac{\partial^{1-\aaaa}}{\partial t^{1-\alpha}}\dfrac{\partial^2 u(x,t)}{\partial x^2}\rrrr|_{t=0}=0,$$
we obtain
\begin{equation*}
u_t(x,0)=\llll.\dfrac{\partial^{1-\aaaa}F(x,t)}{\partial t^{1-\alpha}}\rrrr|_{t=0}.
\end{equation*}
The values of the solution at time $t=\tttt$ are approximated using the second order Taylor expansion
$$u(x,\tttt)=u(x,0)+\tttt u_t(x,0)+O\llll(\tttt^2\rrrr)$$
In the fractional subdiffusion equation \eqref{TFSE2}
	$$F(x,t)=2(1-3x)(5t^2-4t+1)+x^2(1-x)\llll(\dddd{10t^{2-\aaaa}}{\GGGG(3-\aaaa)}-\dddd{4t^{1-\aaaa}}{\GGGG(2-\aaaa)}\rrrr),$$
$$	\dfrac{\partial^{1-\aaaa}F(x,t)}{\partial t^{1-\alpha}}=
2(1-3x)\llll(\dddd{10t^{1+\aaaa}}{\GGGG(2+\aaaa)}-\dddd{4t^\aaaa}{\GGGG(1+\aaaa)}\rrrr)+x^2(1-x)\llll(10t-4\rrrr).
$$
The partial derivative $u_t(x,t)$ at time $t=0$ has values
\begin{equation*}
u_t(x,0)=\llll.\dfrac{\partial^{1-\aaaa}F(x,t)}{\partial t^{1-\alpha}}\rrrr|_{t=0}=-4x^2(1-x).
\end{equation*}
Then 
$$u(x,\tttt)= u(x,0)+\tttt u_t(x,0)+O\llll(\tttt^2 \rrrr)$$
$$u(x,\tttt)= x^2(1-x)-4\tttt x^2(1-x)=x^2(1-x)(1-4\tttt)+O\llll(\tttt^2 \rrrr)$$
We obtain the second order approximation for the solution of equation \eqref{TFSE2} on the first layer on the grid $\mathcal{G}$.
\begin{equation}\label{GL1}
U_n^1=(nh)^2(1-nh)(1-4\tttt),\quad(n=1,2,\cdots,N-1)
\end{equation}
	\begin{table}[ht]
    \caption{Maximum error and order of numerical solutions \eqref{NS3} and \eqref{NS4} with approximation \eqref{GL1}   for the solution of  equation \eqref{TFSE2} on the first layer of $\mathcal{G}$,  at time $t=1$ when $\alpha=0.6$ and $\tttt=h$. }
    \begin{subtable}{0.5\linewidth}
      \centering
  \begin{tabular}{l c c }
  \hline \hline
    $h\;(\tttt=h)$ & $Error$ & $Order$  \\ 
		\hline \hline
$0.05\quad$         &$0.00051282$      &$1.35060$\\
$0.025\quad$        &$0.00019690$      &$1.38100 $\\
$0.0125\quad$       &$0.00007518$     &$1.38896$\\
$0.00625\quad$      &$0.00002862$     &$1.39337$\\
$0.003125\quad$     &$0.00001088$   &$1.39601$\\
		\hline
  \end{tabular}
    \end{subtable}%
    \begin{subtable}{.5\linewidth}
      \centering
				\quad\quad
  \begin{tabular}{ l  c  c }
    \hline \hline
    $\tttt\;(\tttt=h)$ & $Error$ &$Order$  \\ \hline \hline
$0.05\quad$     & $0.00001730$           & $2.28453$ \\
$0.025\quad$    & $3.85\times10^{-6}$          & $2.16657$ \\
$0.0125\quad$   & $9.02\times10^{-7}$          & $2.09523$ \\
$0.00625\quad$  & $2.17\times10^{-7}$          & $2.05667$ \\
$0.003125\quad$ & $5.29\times10^{-8}$          & $2.03510$ \\%
\hline
  \end{tabular}
    \end{subtable} 
\end{table}
\section{Conclusion} In section 4 we compared the numerical solutions of the ordinary fractional relaxation equation and the partial fractional subdiffusion equation using approximations \eqref{A1} and \eqref{A2} for the Caputo derivative. The higher accuracy of approximation \eqref{A2} results in a noticeable improvement in the performance of the numerical solutions. Numerical experiments suggest that the numerical solutions converge to the exact solutions of the fractional relaxation and subdiffusion equations for all $\aaaa$ between $0$ and $1$. We are going to work on a proof for the convergence of the  numerical solutions discussed in section 4.


\begin{thebibliography}{99}
\bibitem{Cartea2007}  A. Cartea, D. del Castillo-Negrete,   Fractional diffusion models of option prices in markets with jumps.  Physica A, 374(2) (2007), 749--763.
\bibitem{Mainardi1996} F, Mainardi, Fractional relaxation-oscillation and fractional diffusion-wave phenomena, Chaos, Solitons $\&$ Fractals, 7(9) (1996), 1461 -- 1477.
\bibitem{MuslihAgrawalBaleanu2010} S. I. Muslih, Om P. Agrawal, D. Baleanu, A fractional Schrödinger equation and its solution, International Journal of Theoretical Physics, 49(8) (2010), 1746--1752.
\bibitem{Nigmatullin1986} R. R. Nigmatullin, The realization of the generalized transfer equation in a medium with fractal geometry,
 Physica Status Solidi B Basic Research, 133 (1986), 425--430.
\bibitem{LimaFordLumb2014}P. M. Lima, N. J. Ford, and P. M. Lumb, Computational methods for a mathematical model of propagation of nerve impulses in myelinated axons, Applied Numerical Mathematics 85, (2014), 38--53.
\bibitem{Lazarov2015_1} B.Jin, R. Lazarov and Z. Zhou, An Analysis of the $L1$ Scheme for the Subdiffusion Equation with Nonsmooth Data, arXiv:1501.00253, (2015).
\bibitem{Murio2008} D. A. Murio, Implicit finite difference approximation for time fractional diffusion equations , Computers \& Mathematics with Applications, 56(4) (2008), 1138 -- 1145.
\bibitem{ZhuangLiu2006} P. Zhuang, F. Liu, Implicit difference approximation for the time fractional diffusion equation, Journal of Applied Mathematics and Computing, 22(3) (2006), 87--99.
\bibitem{LinXu2007} Y. Lin, C. Xu, Finite difference/spectral approximations for the time-fractional diffusion equation, Journal of Computational Physics, 225 (2007), 1533--1552.
\bibitem{Sidi2004} A. Sidi, Euler-Maclaurin expansions for integrals with endpoint singularities: A new perspective. Numer. Math. 98 (2), (2004) pp. 371–387.
\bibitem{AbramowitzStegun1964}M. Abramowitz, I. A. Stegun, Handbook of Mathematical Functions with Formulas, Graphs, and Mathematical Tables. Dover, New York; 1964.
\bibitem{Diethelm2010} K. Diethelm,  The Analysis of Fractional Differential Equations: An Application-Oriented Exposition Using Differential Operators of Caputo Type. Springer; 2010.
\bibitem{Havil2003} J. Havil,  Gamma: Exploring Euler's Constant. Princeton, NJ: Princeton University Press; (2003).
\bibitem{Podlubny1999} I. Podlubny,  Fractional Differential Equations. Academic Press, San Diego; 1999.
\bibitem{MillerRoss1993} K.S. Miller, B. Ross,  An Introduction to the Fractional Calculus and Fractional Differential Equations. John Wiley \& Sons, New York; 1993.
\bibitem{Chen2012} G. Chen, Mean Value Theorems for Local Fractional Integrals on Fractal Space, Advances in Mechanical Engineering and its Applications 1, (2012), 5--8.
\bibitem{ChenDeng2014} M. Chen, W. Deng,  A second-order numerical method for two-dimensional two-sided space fractional convection diffusion equation, Applied Mathematical Modelling 38(13), (2014), 3244--3259.
\bibitem{DengLi2012}  W. Deng, C. Li, Numerical schemes for fractional ordinary differential equations, In: Miidla, P. (ed.) Numerical Modelling. InTech, Rijeka (2012), 355--374.
\bibitem{Dimitrov2013} Y. Dimitrov, Numerical Approximations for Fractional Differential Equations, Journal of Fractional Calculus and Applications, 5(3S), (2014), No. 22, 1--45. 
\bibitem{GulsuOzturkAnapali2013} M. G\"ulsu, Y. \"Ozt\"urk, and A. Anapal{\i}, Numerical approach for solving fractional relaxation-oscillation equation, Applied Mathematical Modelling 37 (8), (2013), 5927--5937.
\bibitem{GaoSunZhang2012} G. Gao, Z. Sun, and Y. Zhang, A finite difference scheme for fractional sub-diffusion equations on an unbounded domain using artificial boundary conditions, Journal of Computational Physics 231, (2012), 2865--2879.
\bibitem{GaoSunZhang2014} G. Gao, Z. Sun, and  H. Zhang, A new fractional numerical differentiation formula to approximate the Caputo fractional derivative and its applications, Journal of Computational Physics 259, (2014), 33--50.
\bibitem{Hasse1930} H. Hasse, Ein Summierungsverfahren f\"ur die Riemannsche Zeta-Reihe., Math. Z. 32, (1930), 458--464.
\bibitem{Lazarov2015_2} B.Jin, R. Lazarov and Z. Zhou, Two Fully Discrete Schemes for Fractional Diffusion and Diffusion-Wave Equations, 	arXiv:1404.3800, (2015). 
\bibitem{LiDengWu2011} C. Li, W. Deng, Y. Wu, Numerical analysis and physical simulations for the time fractional radial diffusion equation, Computers and Mathematics with Applications 62, (2011), 1024--1037.
\bibitem{KrishnaveniKannanBalachandar2013}K. Krishnaveni, K. Kannan, and S. R. Balachandar, Polynomial Approximation Method for Solving Composite Fractional Relaxation/Oscillation Equations, World Applied Sciences Journal 25 (12), (2013) 1789--1796.
\bibitem{LiDing2014} C. Li, H. Ding, Higher order finite difference method for the reaction and anomalous-diffusion equation, Applied Mathematical Modelling 38 (2014) 3802--3821.
\bibitem{LiChenYe2011}C. Li, A. Chen and J. Ye, Numerical approaches to fractional calculus and fractional ordinary
differential equation, Journal of Computational Physics 230, (2011) 3352--3368.
\bibitem{Sondow1994} J. Sondow,  Analytic Continuation of Riemann's Zeta Function and Values at Negative Integers via Euler's Transformation of Series, Proc. Amer. Math. Soc. 120, (1994), 421--424.
\bibitem{WeiChen2013}S. Wei, W. Chen, A Matlab toolbox for fractional relaxation-oscillation equations, arXiv:1302.3384, (2013).
\bibitem{Weierstrass1885} K. Weierstrass, \"Uber die analytische Darstellbarkeit sogenannter willk\"urlicher Functionen einer reellen
Ver\"anderlichen, Sitzungsberichte der Akademie zu Berlin, (1885), 633--639 and 789--805.
\bibitem{Weniger2007} E. J. Weniger  Asymptotic Approximations to Truncation Errors of Series Representations for Special Functions. in A. Iske and J. Levesley (Eds.), Algorithms for Approximation, (2007), 331--348.
\bibitem{YanPalFord2014}Y. Yan, K. Pal and N. J. Ford, Higher order numerical methods for solving fractional differential equations, BIT Numer Math 54, (2014),555--584.
\bibitem{ZengLiLiuTurner2013} F. Zeng, C. Li, F. Liu, and I. Turner,
 The Use of Finite Difference/Element Approaches for Solving the Time-Fractional Subdiffusion Equation, SIAM J. Sci. Comput., 35(6), (2013), A2976--A3000.
\end{thebibliography}
\end{document}